\def\subsection{\@startsection{subsection}{3}%
	\z@{.9\linespacing\@plus.7\linespacing}{.1\linespacing}%
	{\normalfont\bfseries}}
\newtheorem{theorem}{Theorem}
\newtheorem{lemma}{Lemma}
\newtheorem{prop}{Proposition}
\newtheorem{defn}{Definition}
\newcommand{\C}{\mathbb C}
\newcommand{\ztwo}{\mathbb Z/2\mathbb Z}
\newcommand{\nc}{\newcommand}
\nc{\dmo}{\DeclareMathOperator}
\nc{\thm}{\theorem}
\nc{\cor}{\corollary}
\nc{\mc}{\mathcal}
\nc{\mb}{\mathbb}
\nc{\mf}{\mathfrak}
\nc{\ul}{\underline}
\nc{\ol}{\overline}
\nc{\N}{\mb N}
\nc{\R}{\mb R}
\nc{\Z}{\mb Z}
\nc{\Q}{\mb Q}
\nc{\F}{\mathbb{F}}
\nc{\gm}{\gamma}
\nc{\Hom}{Hom}
\nc{\SO}{SO}
\nc{\SL}{SL}
\nc{\Spin}{Spin}
\nc{\sgn}{sgn}
\nc{\half}{\frac{1}{2}}
\nc{\mat}[4]{
	\begin{pmatrix}
		#1 & #2 \\
		#3 & #4
	\end{pmatrix}
}
\nc{\lip}{\langle}
\nc{\rip}{\rangle}
\dmo{\ind}{ind}
\dmo{\St}{St}
\dmo{\st}{st}
\dmo{\res}{res}
\dmo{\Sq}{Sq}
\nc{\beq}{\begin{equation*}}
\nc{\eeq}{\end{equation*}}
\dmo{\Ind}{Ind}
\dmo{\Mod}{mod}
\dmo{\ver}{ver}
\dmo{\GL}{GL}
\dmo{\Ord}{Ord}
\title{Stiefel Whitney Classes for Real representations of $\GL_2(\F_q)$}
\author{Jyotirmoy Ganguly, Rohit Joshi}
\makeatletter\@addtoreset{chapter}{part}\makeatother%
\address{The Institute of Mathematical Sciences, IV Cross Road, CIT Campus, Taramani, Chennai-600113, Tamil Nadu, India} \email{jyotirmoy.math@gmail.com}  
\address{Bhaskaracharya Pratishthana, 56/14, Erandavane, Damle Path, Off Law College Road,Pune-411004, Maharashtra, India} \email{rohitsj@students.iiserpune.ac.in}  
\begin{document}
	
\maketitle
\begin{abstract}
We compute the total Stiefel Whitney class for a real representation $\pi$ of $\GL_2(\F_q)$, where $q$ is odd.
The obstruction class of  $\pi$ is defined to be the Stiefel Whitney class of lowest
positive degree that does not vanish. We provide an expression for the obstruction class of $\pi$ in terms of its character values if $\det\pi=1$. 
\end{abstract}

\tableofcontents

\section{Introduction}\label{intro}

Let $\pi$ be a real representation of a finite group $G$. One can associate 
Stiefel Whitney classes $w_i(\pi)\in H^i(G, \Z/2\Z)$ (see \cite[Section $2.6$, page no. $50$]{benson}) to it such that $w_i(\pi)=0$ if $i>\deg\pi$. We define the ``obstruction class" to be $w_i(\pi)$ where the degree $i>0$, called the obstruction degree of $\pi$, is minimal with $w_i(\pi)\neq 0$. In fact the obstruction degree of $\pi$ turns out to be a power of $2$ (see \cite[Problem $8$-B]{mch}). 
In this paper we calculate the total Stiefel Whitney class of a real representaton $\pi$ of $\GL_2(\F_q)$, where $q$ is odd. Moreover, if $w_1(\pi)=0$ we compute the obstruction class of $\pi$ in terms of its character values.

Let $G=\GL_2(\F_q)$ be the general linear group for a vector space of dimension 2 over the field with $q$ elements where $q$ is odd. Let $\pi$ be a real representation of $G$. 
%
Take $z_x=\mat x00x$ and $t_{(x,y)}=\mat x00y$. Consider the terms

\begin{enumerate}
	%
	\item 
	$m_{10}=\frac{1}{8}(\chi_{\pi}(z_1)-\chi_{\pi}(z_{-1}))$
	\item 
	$m_{11}=\frac{1}{8}(\chi_{\pi}(z_1)-2\chi_{\pi}(t_{(1,-1)})+\chi_{\pi}(z_{-1}))$.
\end{enumerate}
It turns out that $m_{10}, m_{11}\in \Z_{\geq 0}$.

From \cite[Page 30,Theorem 2.23]{tota} we have for $q\equiv 1\pmod 4$ 
$$H^*(\GL_2(\F_q),\Z/2\Z) = \ztwo[e_1,e_2,c_1,c_2]/(e_1^2=0,e_2^2=0), $$
where $e_1,e_2,c_1,c_2$ are elements of degree $1,3,2,4$ respectively.

A real representation $\pi$ of $G$ is called achiral if $\det(\pi)=1$. Otherwise it is called chiral.
We define 
\[
\delta=
\begin{cases}
0,\quad \text{if $\pi$ is achiral},\\
1,\quad \text{if $\pi$ is chiral}.
\end{cases}
\]
For $a\in \mathbb{Z}_{> 0}$, write $\upsilon_2(a)$ to denote the highest power of $2$ dividing $a$.  Now we state our main results.
\begin{theorem}\label{tsw}
	Let $\pi$ be a real representation of $\GL_2(\F_q)$ such that $q\equiv 1\pmod 4$. Then 
	$$w(\varphi)=(1+\delta e_1)(1+c_1+c_2)^{m_{10}}(1+c_1)^{m_{11}}.$$
\end{theorem}

\begin{theorem} \label{main}  
Let $\pi$ be a real achiral representation of $\GL_2(\F_q)$, with $q \equiv 1 \pmod 4$ and $k=\min{\{\upsilon_2(m_{10}+m_{11}),\upsilon_2(m_{01})+1\}}$. Then the obstruction class of $\rho$ is
\[
w_{2^{k+1}}(\pi) =
\begin{cases}
c_1,\quad \text{if $k=0$},\\\\
\left(\dfrac{m_{10} + m_{11}}{2^k}\right)c_1^{2^k}+\left(\dfrac{ m_{10}}{2^{k-1}}\right)c_2^{2^{k-1}},\quad \text{if $k\geq 1$}.
\end{cases}
\]


\end{theorem}

We also have a similar results for a real representation $\pi$ of $\GL_2(\F_q)$ where $q \equiv 3 \pmod 4$ (See Theorem \ref{tsw2} and Theorem \ref{main2} ). We obtain explicit expressions for $w_1(\pi)$ and $w_2(\pi)$ where $\pi$ varies over all orthogonally irreducible  representations (see Section \ref{oir})  of $G$ (see Table \ref{table1}). Moreover, we calculate their obstruction classes provided $w_1(\pi)=w_2(\pi)=0$. (see Table \ref{table2}). 

Let $C_n$ denote the (additive) cyclic group of order $n$.
The diagonal subgroup $D$ of $G$ is isomorphic to $C_{q-1} \times C_{q-1}$. Moreover, the restriction map $\res: H^*(G, \ztwo)\to H^*(D,\ztwo)$ is injective (see Section \ref{detect}). In short we say
the diagonal subgroup $D$ detects the mod $2$ cohomology of $G$.
Therefore our problem reduces to finding total Stiefel Whitney class and Obstruction class for the representation $\pi\mid_D$. 

As an application we show that an anisotropic torus $M$ does not detect the mod $2$ cohomolgy of $G$ (see Proposition \ref{app1}). If the obstruction degree of $\pi$ is $2^l$ then we compute the Stiefel Whitney classes $w_j(\pi)$, where $2^l<j<2^{l+1}$, using Wu's formula (see Theorems \ref{app2} and \ref{app3}). 




This paper is arranged as follows. In Section \ref{2}, after establishing notations, we provide some results on binomial coefficients. We review the theory of Stiefel Whitney classes for representations of finite groups in Section \ref{swc.section}. 
In Section \ref{sectsw} we compute the total Stiefel Whitney class of real representations of bicyclic groups and $\GL_2(\F_q)$ (see Lemma \ref{lem0} and Theorem \ref{q3tot}). Section \ref{bicyclic} is dedicated to computations of the obstruction classes for real representations of bicyclic groups in terms of character values (see for instance Theorems \ref{obstruction} and \ref{c2obs}). We apply these to obtain the required results for $G$.
 Section \ref{5}  contains a proof of Theorem \ref{main}. For an orthogonally irreducible representation $\pi$ we calculate $w_1(\pi), w_2(\pi)$ and summerize the results in Table \ref{table1}. We also provide a table ( Table \ref{table2}) showing obstruction classes for certain representaions of $\GL_2(\F_q)$. Finally in Section \ref{app} we provide some applications (see Theorems \ref{app2}, \ref{app3} and \ref{app1}).  

{\bf Acknowledgements:} The authors would like to thank Steven Spallone for helpful conversations. The first author of this paper was supported by a post doctoral fellowship from IMSc, Chennai. The second author of this paper was supported by a post doctoral fellowship from Bhaskaracharya Pratishthan, Pune and a fellowship from ARSI (the Foundation for the Advancement of Arts and Sciences from India, Inc.), an organization founded by Ravi Kulkarni.

\section{Notation and Preliminaries} \label{2}

\subsection{Orthogonal Representations and Spinoriality}\label{oir}

A complex representation $(\pi,V)$ of a finite group $G$ is called orthogonal if it preserves a non-degenerate symmetric bilinear form. An orthogonal representation $\pi$ is called spinorial if it can be lifted to $\mathrm{Pin}(V)$, the topological double cover of $\mathrm{O}(V)$.  
See \cite{jyoti} for reference.

Here we present a brief review on real and complex representations of a finite group $G$. For details and proofs we refer the reader to \cite[Section II.6]{BrokerDieck}.
A real (or complex) representation means the underlying vector space is real (or complex).
For a complex representation $(\pi,V)$ of $G$ we write $(\pi_{\R},V_{\R})$ for the realization of $\pi$. This simply means that we forget the complex structure on $V$ and regard it as a real representation. When $\pi$ is orthogonal, there is a unique real representation $(\pi_0,V_0)$, up to isomorphism, so that $\pi \cong \pi_0 \otimes_{\R} \C$. Observe that a real representation $\pi$ of a finite group is equivalent to an orthogonal representation. 



Any orthogonal complex representation $\Pi$ of $G$ can be decomposed as
\beq
\Pi = S(\pi) \oplus \bigoplus_j \varphi_j,
\eeq
where $S(\pi)=\pi \oplus \pi^\vee$ and each $\varphi_j$ is irreducible orthogonal and $\pi$ is arbitrary. For details see \cite[Section $2.1$]{spjoshi}.

We say a complex representation $\pi$ is \emph{orthogonally irreducible}, provided $\pi$ is orthogonal, and $\pi$ does not decompose into a direct sum of \emph{orthogonal} representations. Thus, an orthogonal representation $\pi$ is orthogonally irreducible iff $\pi$ is irreducible, or of the form $S(\phi)$ where $\phi$ is irreducible but not orthogonal. We write `OIR' for ``orthogonally irreducible representation".


For an even integer $n$, write $C_n$ and $\mu_n$ for the additive cyclic  group and multiplicative cyclic group of order $n$ respectively. Let $\zeta_n=e^{\frac{2 \pi i}{n}}$ be the primitive $n^{th}$ root of unity. Let $\chi^j: C_n \to \mu_n$ denote the representation where $\chi^j(1)=\zeta_n^j$ and $\sgn=\chi^{n/2}$. 
We write $\epsilon_{\chi^j}$ to denote the parity of $j$. We say $\chi^j$ is odd (resp. even) if $\epsilon_{\chi^j}$ is odd (resp. even).


\subsection{Some Results on Binomial Coefficients}

For non-negative integers $m, n$ and a prime $p$ one gets base $p$ expansions for $m$ and $n$ as $m=\sum\limits_{i=0}^{k}m_ip^i$ and $n=\sum\limits_{i=0}^{k}n_ip^i$. Here we state Lucas theorem (\cite{fine}) which we use extensively in Section $4$.

\begin{theorem}[Lucas Theorem]\label{lucas}
For non-negative integers $m, n$ and a prime $p$ we have
\begin{equation*}
\binom{m}{n}\equiv \prod_{i=0}^{k}\binom{m_i}{n_i}\pmod p.
\end{equation*}
\end{theorem}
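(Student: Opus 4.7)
The plan is to work in the polynomial ring $\F_p[x]$ and exploit the ``Freshman's dream'' identity $(1+x)^p \equiv 1 + x^p \pmod p$, which follows because $\binom{p}{j}$ is divisible by $p$ for $1 \le j \le p-1$, so all intermediate terms in the usual binomial expansion vanish modulo $p$. A straightforward induction on $i$ then promotes this to $(1+x)^{p^i} \equiv 1 + x^{p^i} \pmod p$ for every $i \ge 0$.

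Using the base-$p$ expansion $m = \sum_{i=0}^k m_i p^i$, I would then write
\begin{equation*}
(1+x)^m = \prod_{i=0}^k \bigl((1+x)^{p^i}\bigr)^{m_i} \equiv \prod_{i=0}^k (1 + x^{p^i})^{m_i} \pmod p,
\end{equation*}
and apply the ordinary binomial theorem to each factor on the right, obtaining $\prod_{i=0}^k \sum_{j_i = 0}^{m_i} \binom{m_i}{j_i} x^{j_i p^i}$ modulo $p$.

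The conclusion is a coefficient extraction at $x^n$ on both sides. On the left this is $\binom{m}{n}$. On the right it is the sum of $\prod_i \binom{m_i}{j_i}$ over tuples $(j_0,\dots,j_k)$ with $0 \le j_i \le m_i$ and $\sum_i j_i p^i = n$. Because each $m_i \le p-1$, the uniqueness of the base-$p$ representation of $n$ forces $j_i = n_i$ as the only contributing tuple, with the convention $\binom{m_i}{n_i}=0$ when $n_i>m_i$ correctly producing zero on both sides when some digit of $n$ exceeds the corresponding digit of $m$.

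The main obstacle is essentially bookkeeping: one has to confirm that the constraint $j_i \le m_i \le p-1$, together with uniqueness of digits in base $p$, really does isolate $j_i = n_i$ as the sole nonzero term in the coefficient extraction. Everything else is routine polynomial arithmetic over $\F_p$.
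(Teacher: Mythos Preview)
Your argument is correct and is the standard generating-function proof of Lucas' theorem. The paper itself does not supply a proof: it merely states the result and cites \cite{fine}, so there is no in-paper argument to compare against. Your proposal would serve perfectly well as a self-contained proof here.
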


\begin{prop}\label{binom1}
	If $\upsilon_2(n)\geq k$, then $\dbinom{n}{2^k}\equiv \dfrac{n}{2^k}\pmod 2$.
\end{prop}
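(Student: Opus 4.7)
The plan is to apply Lucas's theorem (Theorem \ref{lucas}) with $p=2$ to the binomial coefficient $\binom{n}{2^k}$ and compare the result with the reduction of $n/2^k$ modulo $2$.

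First I would write both $n$ and $2^k$ in base $2$. Since $\Ord_2(n) \geq k$, the binary expansion of $n$ has the form $n = \sum_{i \geq k} n_i 2^i$, so the digits $n_0, n_1, \ldots, n_{k-1}$ all vanish. The base-$2$ expansion of $2^k$ is concentrated at a single position: the digit in place $k$ is $1$ and every other digit is $0$.

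Next I would invoke Lucas's theorem, which gives
\begin{equation*}
\binom{n}{2^k} \equiv \prod_{i \geq 0} \binom{n_i}{(2^k)_i} \pmod 2.
\end{equation*}
For $i \neq k$ the factor is $\binom{n_i}{0} = 1$, while for $i = k$ the factor is $\binom{n_k}{1} = n_k$. Hence $\binom{n}{2^k} \equiv n_k \pmod 2$.

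Finally I would observe that $n/2^k = \sum_{i \geq k} n_i 2^{i-k}$, whose least significant binary digit is precisely $n_k$. Therefore $n/2^k \equiv n_k \pmod 2$, and combining this with the previous step yields $\binom{n}{2^k} \equiv n/2^k \pmod 2$, as required. There is no substantive obstacle here: the statement is essentially an immediate unpacking of Lucas's theorem, and the only thing worth emphasising in the write-up is that the hypothesis $\Ord_2(n) \geq k$ is exactly what guarantees that the low-order digits of $n$ do not interfere with the single non-trivial factor $\binom{n_k}{1}$.
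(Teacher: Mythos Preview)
Your proof is correct and follows exactly the approach indicated in the paper, which simply states that the result follows from Lucas's theorem; you have merely spelled out the details of that deduction. One small remark: the hypothesis $\Ord_2(n)\geq k$ is really needed to make $n/2^k$ an integer rather than to prevent low-order digits from interfering in the Lucas product (those factors are $\binom{n_i}{0}=1$ regardless), but this does not affect the validity of your argument.
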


\begin{proof}
	The proof follows from Theorem \ref{lucas}.
\end{proof}

The Vandermonde's identity for binomial coefficients states that
\begin{equation}\label{van}
\binom{n_1+\cdots+n_l}{m}=\sum_{k_1+\cdots+k_l=m}\binom{n_1}{k_1}\binom{n_2}{k_2}\cdots\binom{n_l}{k_l}.
\end{equation} 

We write $\upsilon_2(n)$ to denote the $2$-adic valuation of the $n$. 

\begin{prop}
	For $n\in \Z_{\geq 0}$, if $\upsilon_2(n)\geq k$ and $a<2^{k}$, then $\binom{n}{a}\equiv 0\pmod2$.


\end{prop}
\begin{proof}
The proof is immediate from Lucas Theorem \ref{lucas}.
\end{proof}

\begin{prop}\label{binomind} 
	Suppose $m$ is a positive integer. Then $\binom{m}{2^i} \equiv 0 \pmod 2 $ for $i=0,1,2,\ldots,k$, if and only if,  $2^{k+1} \mid m$.
\end{prop}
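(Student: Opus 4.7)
The plan is to reduce the problem directly to a statement about binary digits via Lucas' Theorem (Theorem \ref{lucas}), and then read off the equivalence.

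Write the binary expansion $m = \sum_{j \geq 0} m_j 2^j$ with $m_j \in \{0,1\}$. Since the binary expansion of $2^i$ consists of a single $1$ in position $i$ and $0$'s elsewhere, Lucas' Theorem applied with $p = 2$ gives
\begin{equation*}
\binom{m}{2^i} \equiv \binom{m_i}{1} \prod_{j \neq i} \binom{m_j}{0} \equiv m_i \pmod 2.
\end{equation*}
Thus $\binom{m}{2^i}$ is odd precisely when the $i$-th binary digit of $m$ is $1$.

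With this identification, the proposition becomes the tautology that $m_0 = m_1 = \cdots = m_k = 0$ if and only if $2^{k+1} \mid m$, and the proof is complete in both directions simultaneously. There is no real obstacle here; the only subtlety is confirming that the base-$2$ expansion of $2^i$ has exactly one nonzero digit, which is immediate.
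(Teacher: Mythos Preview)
Your proof is correct and follows essentially the same approach as the paper: both extract from Lucas' Theorem that $\binom{m}{2^i}\equiv m_i\pmod 2$, where $m_i$ is the $i$-th binary digit of $m$, from which the equivalence is immediate. The paper wraps the ``only if'' direction in an induction on $k$, but your direct reading of the binary digits is cleaner and avoids that unnecessary scaffolding.
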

\begin{proof}
	The if part follows from Theorem \ref{lucas}. For the only if part, we prove the result by indution on $k$. The $k=0$ case is trivial.
	Suppose the statement holds for  $k=l-1$ i.e. if $\binom{m}{2^i} \equiv 0 \pmod 2$ for $i=0,1,\ldots,l-1$ then $2^l \mid m$.
  Now if moreover $\binom{m}{2^l} \equiv 0 \pmod 2$, then using Lucas Theorem \ref{lucas} we obtain that the $(l+1)^{th}$ digit in the binary expansion (counting from the right end) of $m$ should be $0$. Hence we get $2^{l+1}\mid m$. 
\end{proof}

\section{Review of Stiefel Whitney Classes} \label{swc.section}
\subsection{Group Cohomology}

For $R$ a ring, write $H^*(G,R)$ for the usual group cohomology ring, with $R$ regarded as a trivial $G$-module.

If $\varphi: G_1 \to G_2$ is a group homomorphism, we have an induced map $\varphi^*: H^*(G_2,R) \to H^*(G_1,R)$ on cohomology.

Write 
\begin{equation} \label{kappa}
\kappa : H^*(G,\Z) \to H^*(G,\Z/2\Z) 
\end{equation}
for the coefficient map of cohomology.


\subsection{Characteristic Classes}
%
%

In this section we review the theory of characteristic classes of representations of a finite group $G$.  Our reference is \cite{guna}.
Associated to complex representations $\pi$ of $G$ are cohomology classes $c_i(\pi) \in H^{2i}(G,\Z)$, for $0 \leq i \leq \deg \pi$, called Chern classes.   
We have $c_0(\pi)=1$.  The first Chern class, applied to linear characters, gives an isomorphism 
\beq
c_1: \mathrm{Hom}(G,S^1) \overset{\sim}{\to} H^2(G,\Z).
\eeq
This extends to arbitrary complex representations $\pi$ by $c_1(\pi)=c_1(\det \pi)$.

%

\begin{defn} If $\pi$ is an orthogonal complex representation  of $G$, put
	\beq
	w^\C_i(\pi)=w^\R_i(\pi_0),
	\eeq
	for $0 \leq i \leq \deg \pi$.
\end{defn}

Thus if $\rho$ is a real representation, we have $w^\R(\rho)=w^\C(\rho \otimes_\R \C)$.

\begin{lemma}\label{kap} If $\pi$ is a complex representation, then 
	\beq
	w^\C(S(\pi))=\kappa(c(\pi)).
	\eeq
\end{lemma}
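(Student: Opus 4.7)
The plan is to reduce the assertion, via the definition $w^\C_i(\pi) = w^\R_i(\pi_0)$ and the previously observed isomorphism $S(\pi)_0 \cong \pi_\R$, to the classical identity $w^\R(\pi_\R) = \kappa(c(\pi))$ for an arbitrary complex representation $\pi$ of $G$. Granting this, the lemma follows immediately: $w^\C(S(\pi)) = w^\R(S(\pi)_0) = w^\R(\pi_\R) = \kappa(c(\pi))$.

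To prove the reduced identity, I would invoke the axiomatic characterization of Stiefel--Whitney classes from \cite{guna}. Both assignments $\pi \mapsto w^\R(\pi_\R)$ and $\pi \mapsto \kappa(c(\pi))$ are natural under group homomorphisms, since each of $c$, realification, $w^\R$, and the reduction $\kappa$ commutes with pullback. They are also multiplicative in $\pi$: for a direct sum $(\pi \oplus \pi')_\R = \pi_\R \oplus \pi'_\R$, so the Whitney sum formula for $w^\R$ gives multiplicativity of the left-hand side, while $c(\pi \oplus \pi') = c(\pi)c(\pi')$ and the ring homomorphism property of $\kappa$ give the same for the right-hand side. It then suffices to verify the identity on $1$-dimensional characters $\chi : G \to \C^\times$; for such a $\chi$ the realification $\chi_\R$ takes values in $SO(2) \subset O(2)$, forcing $w_1^\R(\chi_\R) = 0$, and the remaining equation $w_2^\R(\chi_\R) = \kappa(c_1(\chi))$ follows from the isomorphism $c_1 : \Hom(G, S^1) \overset{\sim}{\to} H^2(G, \Z)$ recalled in Section \ref{swc.section}, together with the parallel description of $H^2(G, \Z/2\Z)$ classifying oriented real $2$-plane bundles.

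The main obstacle will be the passage from characters to arbitrary representations in a purely algebraic framework, since a general complex representation of $G$ need not be a direct sum of $1$-dimensional characters. The cleanest route is topological: pull back the universal bundle on $BU(n)$ via the classifying map of $\pi$ to $BG$, and use the well-known Milnor--Stasheff formula $w(E_\R) = \kappa(c(E))$ for complex vector bundles \cite{mch}. Alternatively, one can apply Brauer's induction theorem together with the naturality and multiplicativity of both sides and standard formulas for characteristic classes of induced representations, in order to bootstrap from the character case to the general case.
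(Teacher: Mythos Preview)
Your proposal is correct and follows exactly the same reduction as the paper: $w^\C(S(\pi)) = w^\R(S(\pi)_0) = w^\R(\pi_\R) = \kappa(c(\pi))$. For the last equality the paper simply cites \cite[Problem 14-B]{mch}, which is precisely the topological Milnor--Stasheff identity you eventually identify as ``the cleanest route''; your detour through the axiomatic characterization and Brauer induction is unnecessary but not wrong.
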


\begin{proof}
	We have
	\beq
	\begin{split}
		w^\C(S(\pi)) &= w^\R(S(\pi)_0) \\
		&= w^\R(\pi_\R) \\
		&= \kappa(c(\pi)), \\
	\end{split}
	\eeq
	the last equality by  \cite[Problem 14-B]{mch}.
\end{proof}

Henceforth all representations will be complex representations, and we will drop the superscript `$\C$' from $w^\C$.

Let $\pi$ be an orthogonal (complex) representation.  Again one has $w_0(\pi)=1$, and the first Stiefel Whitney class, applied to linear characters $G \to \{\pm 1\} \cong \Z/2\Z$, is the well-known isomorphism
\beq
w_1: \mathrm{Hom}(G, \Z/2\Z)  \overset{\sim}{\to} H^1(G, \ztwo).
\eeq
Therefore $w_1(\pi)=\det\pi$.

\begin{defn}\label{achiral}
A real representation $\pi$ of $G$ is called achiral if $w_1(\pi)=0$. It is called chiral otherwise. 
\end{defn}

%
%
%
%
%
%

\begin{prop} \label{spin.swc} Let $G$ be a finite group, and $\pi$ an orthogonal representation of $G$.  Then $\varphi$ is spinorial iff
	\beq
	w_2(\pi)=w_1(\pi) \cup w_1(\pi).
	\eeq
\end{prop}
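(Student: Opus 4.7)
The plan is to identify spinoriality with the vanishing of the pullback of an extension class in $H^2(G,\ztwo)$ and then compute that class in terms of universal Stiefel--Whitney classes.

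First I would recall the short exact sequence
\[
1 \to \ztwo \to \mathrm{Pin}(V) \to \mathrm{O}(V) \to 1,
\]
which represents an extension class $\epsilon \in H^2(\mathrm{O}(V),\ztwo)$. General principles of group extensions tell us that a homomorphism $\pi: G \to \mathrm{O}(V)$ lifts to $\mathrm{Pin}(V)$ if and only if $\pi^*(\epsilon)=0$ in $H^2(G,\ztwo)$. So the whole proposition reduces to showing that, under the identification of $H^*(\mathrm{O}(V),\ztwo)$ with a quotient of the universal $H^*(B\mathrm{O},\ztwo)=\F_2[w_1,w_2,\ldots]$, the class $\epsilon$ corresponds to $w_2+w_1^2$. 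Once this is established, naturality \eqref{funct.w} gives $\pi^*(\epsilon)=w_2(\pi)+w_1(\pi)\cup w_1(\pi)$, and vanishing is exactly the stated equality (over $\F_2$).

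The identification of $\epsilon$ I would approach by dimension-counting and two restriction tests. Since $H^2(B\mathrm{O},\ztwo)$ has $\F_2$-basis $\{w_1^2,w_2\}$, we may write $\epsilon=a\,w_1^2+b\,w_2$ with $a,b\in\F_2$. Restricting along $\mathrm{SO}(V)\hookrightarrow\mathrm{O}(V)$ the extension becomes the Spin extension of $\mathrm{SO}(V)$, whose class is the well-known $w_2$; since $w_1$ restricts to zero on $\mathrm{SO}(V)$, this forces $b=1$. To pin down $a$, I would restrict to the subgroup $\mathrm{O}(1)\cong\ztwo$: the preimage of $\mathrm{O}(1)$ inside $\mathrm{Pin}(V)$ in the convention used (the $\mathrm{Pin}^-$ convention, where a reflection lifts to an element of order $4$) is $\Z/4\Z$, a non-split extension of $\ztwo$ by $\ztwo$, whose extension class is the nonzero element $w_1^2\in H^2(\ztwo,\ztwo)$. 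This forces $a=1$, and hence $\epsilon=w_1^2+w_2$.

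Putting the two steps together, naturality of characteristic classes and of the extension class yields
\[
\pi^*(\epsilon)=w_2(\pi)+w_1(\pi)\cup w_1(\pi),
\]
so the lift to $\mathrm{Pin}(V)$ exists exactly when $w_2(\pi)=w_1(\pi)\cup w_1(\pi)$, which is the claimed criterion. The only real obstacle is the identification of the universal class $\epsilon$; the rest is essentially formal. This calculation is standard (compare \cite{jyoti}), so I would keep this part short and refer there for the detailed verification of which Pin convention is being used and the resulting value of $a$.
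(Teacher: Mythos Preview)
The paper's own ``proof'' is simply a citation to \cite{guna}, with no argument given. Your proposal supplies the standard argument behind that citation and is correct in outline: the Pin double cover determines a universal obstruction class in $H^2(B\mathrm{O}(V),\ztwo)$, a homomorphism $\pi:G\to\mathrm{O}(V)$ lifts iff the pullback of this class along $B\pi:BG\to B\mathrm{O}(V)$ vanishes, and the class is $w_2+w_1^2$.

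Two small points of care. First, your phrase ``extension class $\epsilon\in H^2(\mathrm{O}(V),\ztwo)$'' is a bit loose: $\mathrm{O}(V)$ is a Lie group, not a discrete group, so the relevant home for $\epsilon$ is $H^2(B\mathrm{O}(V),\ztwo)$, arising from the fibration $B\mathrm{Pin}(V)\to B\mathrm{O}(V)$ with fibre $B(\ztwo)$. You effectively correct this yourself when you pass to $H^*(B\mathrm{O},\ztwo)=\F_2[w_1,w_2,\ldots]$, but it is worth stating cleanly. Second, your determination of the coefficient $a$ genuinely depends on the Pin convention (for $\mathrm{Pin}^+$ the restriction to $\mathrm{O}(1)$ is split and one gets $\epsilon=w_2$), so the reference you propose to \cite{jyoti} or \cite{guna} for fixing the convention is not optional decoration but the crux of the computation; make that explicit.
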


\begin{proof} See, for instance, \cite{guna}.
\end{proof}


\subsection{Detection}\label{detect}

Let $G'$ be a subgroup of a finite group $G$. 
We say that $G'$ \emph{detects} the (mod $2$) cohomology of $G$, provided that the restriction map
\beq
\res: H^*(G, \ztwo) \to H^*(G',\ztwo)
\eeq
is injective. 
A special case of \cite[Section 17.2, Exercise 19]{Dummit} gives:

\begin{prop} \label{sylow.spin} If $P$ is a $2$-Sylow subgroup of $G$, then $P$ detects the (mod $2$) cohomology of $G$.
\end{prop}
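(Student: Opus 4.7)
The plan is to invoke the standard transfer (corestriction) map of group cohomology. For any finite-index subgroup $H \leq G$ and any coefficient ring $R$, there is an $R$-linear homomorphism $\mathrm{cor}_H^G : H^*(H, R) \to H^*(G, R)$ whose composition with the restriction $\res_H^G : H^*(G, R) \to H^*(H, R)$ satisfies the basic identity
$$\mathrm{cor}_H^G \circ \res_H^G = [G : H] \cdot \mathrm{id}_{H^*(G,R)}.$$
This is the content of the transfer-restriction formula in group cohomology, and it is the single ingredient the proof requires.

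I would then specialize to $H = P$, a $2$-Sylow subgroup of $G$, with coefficient ring $R = \ztwo$. By Lagrange's theorem $|G| = [G:P]\cdot|P|$, and since $|P|$ exhausts the full $2$-part of $|G|$, the index $[G:P]$ is odd. Reduced modulo $2$ this index becomes $1$, so the composition $\mathrm{cor}_P^G \circ \iota^*$ is the identity on $H^*(G, \ztwo)$. Hence $\iota^* = \res_P^G$ admits a left inverse and is therefore injective, which is precisely the statement that $P$ detects the mod $2$ cohomology of $G$.

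There is no serious obstacle beyond quoting the transfer-restriction formula; the rest of the argument is just the observation that odd integers are units in $\F_2$. If one wished to make the proof self-contained, the only nontrivial step would be constructing the corestriction map and verifying the composition identity, which is entirely standard material (and, per the reference in the excerpt, is outlined in Dummit--Foote, Section 17.2, Exercise 19).
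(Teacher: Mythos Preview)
Your proof is correct and is exactly the standard transfer argument that the paper is pointing to: the paper does not give its own proof but simply cites \cite[Section 17.2, Exercise 19]{Dummit}, which is precisely the corestriction-restriction identity $\mathrm{cor}_P^G \circ \res_P^G = [G:P]\cdot\mathrm{id}$ together with the observation that $[G:P]$ is odd and hence a unit in $\F_2$. There is nothing to add.
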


\begin{prop}\label{sylow1}
Let $P$ denote the $2$-Sylow subgroup of $C_n$. Then 
$$H^*(C_n,\Z/2\Z)=H^*(P,\Z/2\Z).$$
\end{prop}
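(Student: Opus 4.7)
The plan is to decompose $C_n$ as a direct product of its $2$-Sylow subgroup and a subgroup of odd order, and then reduce to a K\"unneth computation in which the odd-order factor contributes nothing modulo $2$.

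First I would write $n = 2^a m$ with $m$ odd. Since $\gcd(2^a, m) = 1$, the cyclic group splits canonically as $C_n \cong P \times Q$, where $P \cong C_{2^a}$ is the $2$-Sylow subgroup and $Q \cong C_m$ is its odd-order complement; the inclusion $\iota: P \hookrightarrow C_n$ is then the canonical injection into the first factor.

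Next, by the K\"unneth formula with field coefficients $\F_2 = \Z/2\Z$, I have
\begin{equation*}
H^*(C_n, \Z/2\Z) \cong H^*(P, \Z/2\Z) \otimes_{\F_2} H^*(Q, \Z/2\Z).
\end{equation*}
Since $|Q| = m$ is odd, it is invertible in $\F_2$; combined with the classical vanishing $|Q| \cdot H^i(Q, \Z/2\Z) = 0$ for $i > 0$, this forces $H^i(Q, \Z/2\Z) = 0$ in positive degrees. Hence $H^*(Q, \Z/2\Z) = \F_2$ is concentrated in degree zero, and the K\"unneth isomorphism collapses to give $\iota^*: H^*(C_n, \Z/2\Z) \overset{\sim}{\to} H^*(P, \Z/2\Z)$, which is exactly the asserted equality.

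There is no serious obstacle here; the only point requiring care is the vanishing of mod-$2$ cohomology of the odd-order factor, which is standard. Alternatively one could argue by combining Proposition \ref{sylow.spin} (injectivity of $\iota^*$) with the surjectivity furnished by the transfer map, since the composition $\iota^* \circ \mathrm{tr}$ equals multiplication by the index $[C_n : P] = m$, which is a unit in $\F_2$.
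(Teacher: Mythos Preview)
Your proof is correct and follows the same idea as the paper, which simply states that the result ``follows from the fact that $C_n = P \times H$, where $H$ is a cyclic group of odd order.'' You have merely made explicit the K\"unneth and odd-order vanishing steps that the paper leaves implicit.
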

\begin{proof}
This follows from the fact that $C_n=P\times H$, where $H$ is a cyclic group of odd order. 
\end{proof}

The following theorem is found in \cite[theorem $4.4$, page no. $227$]{adem}: 

\begin{thm}\label{thm3} If $q$ is odd, then the subgroup $D$ of diagonal matrices in $G=\GL_2(\mb F_q)$ detects the (mod $2$) cohomology of $G$.
\end{thm}

\bigskip
For an orthogonal representation $\pi$ of $G$, we will often write `$w_i(\pi)$' as an abbreviation for 
\beq
\iota^*(w_i(\pi))=w_i(\pi|_D).
\eeq

\section{Total Stiefel Whitney Classes For Real Representations of $\GL_2(\F_q)$}\label{sectsw}

\subsection{Results for the Bicyclic Group}

We first consider the bicyclic group $C_n\times C_n$, where $n\equiv 0\pmod 4$. One has
$$H^*(C_n\times C_n,\ztwo)=\dfrac{\ztwo[s_1,t_1,s_2,t_2]}{\left(s_1^2,s_2^2\right)},$$
where $s_1=w_1(\sgn\otimes \mathbb{1})$ and $s_2=w_1(\mathbb{1}\otimes \sgn)$, $t_1=w_2((\chi^1\otimes 1)_{\mathbb{R}})$ and $t_2=w_2((1\otimes\chi^1)_{\mathbb{R}})$. 
%
Let $$\bar{S}=\Z/n\Z\times \Z/n\Z-\{(0,0), (n/2,0), (0,n/2), (n/2,n/2)\}$$ and 
\begin{equation}\label{s}
S=\bar{S}/\sim,\,\quad \text{where $\sim$ denotes the relation}\, (i,j)\sim (n-i,n-j).
\end{equation}
Consider a real representation $\pi$ of $C_n\times C_n$ of the form 
\begin{equation}\label{eqpi}
\pi=m_0\mathbb{1}\oplus m_1(\sgn\otimes\mathbb{1})\oplus m_2(\mathbb{1}\otimes \sgn)\oplus m_3(\sgn\otimes \sgn)\bigoplus\limits_{(j_1,j_2)\in S} M_{j_1j_2}\left((\chi_{j_1}\otimes \chi_{j_2})_{\mathbb{R}}\right),
\end{equation}
where $m_0, m_1, m_2, m_3, M_{j_1j_2}\in \Z_{\geq 0}$.
%
%
We compute
\begin{equation}
w(\pi)=(1+s_1)^{m_1}\cdot (1+s_2)^{m_2}\cdot (1+s_1+s_2)^{m_3}\cdot \prod\limits_{(j_1,j_2)\in S}(1+j_1t_1+j_2t_2)^{M_{j_1j_2}}.\\
\end{equation}
Let us define
\begin{equation}\label{eq5}
S_{xy}=\{(j_1,j_2)\in S\mid j_1\equiv x\pmod 2, j_2\equiv y\pmod 2\},
\end{equation}
where $x,y \in \{0,1\}$.
We put 
\begin{equation}\label{eq6}
m_{xy}=\sum\limits_{(j_1,j_2)\in S_{xy}}M_{j_1j_2}\quad \text{and}\quad m'= m_0+m_1+m_2+m_3.
\end{equation}

%
%


\begin{lemma}\label{character.formula}
	For $(x,y)\in \{0,1\}^2\setminus (0,0)$, we have
	$$m_{xy}=\frac{1}{8}\left(\chi_{\varphi}(0,0)+(-1)^x\chi_{\varphi}(n/2,0)+(-1)^y\chi_{\varphi}(0,n/2)+(-1)^{x+y}\chi_{\varphi}(n/2,n/2) \right).$$
\end{lemma}

\begin{proof}
	We get the result by solving the following equations:
	\begin{align*}
	\chi_{\pi}(0,0)&=m'+2(m_{00}+m_{10}+m_{01}+m_{11}),\\
	\chi_{\pi}(n/2,0)&= m'+2(m_{00}-m_{10}+m_{01}-m_{11}),\\
	\chi_{\pi}(0,n/2)&= m'+2(m_{00}+m_{10}-m_{01}-m_{11}),\\
	\chi_{\pi}(n/2,n/2)&=m'+2(m_{00}-m_{10}-m_{01}+m_{11}).
	\end{align*}
\end{proof}

We set 

\[
\delta_1=
\begin{cases}
0,\quad \text{if $\det(\pi\mid_{C_n\times 1})=\mathbb{ 1}$},\\
1,\quad\text{if $\det(\pi\mid_{C_n\times 1})=\sgn$}.
\end{cases}
\]
Similarly we define $\delta_2\in \{0,1\}$ depending on $\det(\pi\mid_{1\times C_n})$ . For a representation $\pi$ of $G$ we write $\pi^G$ to denote the fixed space of $G$.

\begin{lemma}\label{lem0}
	Consider a real representation $\pi$ of $C_n\times C_n$ of the form \ref{eqpi}. Then we have 
	\begin{enumerate}
		\item 
		$w_1(\pi)=\delta_1s_1+\delta_2s_2$,
		\item
		$w_2(\pi)=(\delta_{1}\cdot \delta_{2}+\dim \pi+\dim \pi^{C_n\times C_n}+\delta_{1}+\delta_{2})s_1s_2+(m_{10}+m_{11})t_1+(m_{01}+m_{11})t_2$,
		\item
		Moreover, if $m_1m_2+m_2m_3+m_3m_1\equiv 0\pmod 2$ then
		\begin{equation}\label{wpi}
		w(\pi)=(1+\delta_{1}s_1 + \delta_{2}s_2)(1+t_1)^{m_{10}}\cdot (1+t_2)^{m_{01}}\cdot (1+t_1+t_2)^{m_{11}}.
		\end{equation}
	\end{enumerate} 
\end{lemma}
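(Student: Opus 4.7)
The plan is to start from the product formula
\[
w(\pi)=(1+s_1)^{m_1}(1+s_2)^{m_2}(1+s_1+s_2)^{m_3}\prod_{(j_1,j_2)\in S}(1+j_1t_1+j_2t_2)^{M_{j_1j_2}}
\]
already displayed in Section \ref{bicyclic}, collapse each factor modulo the relations of $H^*(C_n\times C_n,\ztwo)$, and then extract the homogeneous degree $1$ and degree $2$ pieces. On the $s$-side, the relations $s_1^2=s_2^2=0$ turn $(1+s_i)^{m_i}$ into $1+m_i s_i$, while the multinomial expansion of $(1+s_1+s_2)^{m_3}$ keeps only exponents $0,1$ on each of $s_1,s_2$, and its $s_1s_2$-coefficient $m_3(m_3-1)$ is even and so vanishes mod $2$. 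On the $t$-side, each factor $1+j_1t_1+j_2t_2$ depends only on the parities of $j_1,j_2$, so grouping $S$ by the four classes $S_{xy}$ from \eqref{eq5} collapses the big product to $(1+t_1)^{m_{10}}(1+t_2)^{m_{01}}(1+t_1+t_2)^{m_{11}}$, with $S_{00}$ contributing trivially.

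For (1), I will read off the degree $1$ part, which equals $(m_1+m_3)s_1+(m_2+m_3)s_2$, and identify $m_1+m_3$ with $\delta_1$ mod $2$ by computing $\det(\pi|_{C_n\times 1})$: among the irreducible summands of $\pi$, only $\sgn\otimes\mathbb{1}$ and $\sgn\otimes\sgn$ contribute a nontrivial sign to the determinant, because realifications of one-dimensional complex representations have trivial determinant. A symmetric argument handles $\delta_2$. For (2), I will extract the degree $2$ piece: no $s$--$t$ cross terms appear, because their total degree is at least $3$, so the piece decomposes as $(m_{10}+m_{11})t_1+(m_{01}+m_{11})t_2$ together with an $s_1s_2$-coefficient $m_1m_2+m_1m_3+m_2m_3\pmod 2$. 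A short mod $2$ manipulation, together with the observation $\dim\pi+\dim\pi^{C_n\times C_n}\equiv m_1+m_2+m_3\pmod 2$ (since $2m_0$ and each $2M_{j_1j_2}$ vanish mod $2$), will convert this coefficient into the intrinsic form $\delta_1\delta_2+\dim\pi+\dim\pi^{C_n\times C_n}+\delta_1+\delta_2$ claimed in the lemma.

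Part (3) is then a bootstrap from (1) and (2). Since $s_1,s_2,s_1s_2,t_1,t_2$ are linearly independent in $H^{\leq 2}(C_n\times C_n,\ztwo)$, the vanishing $w_1(\pi)=w_2(\pi)=0$ forces $\delta_1=\delta_2=0$, the congruences $m_{10}\equiv m_{11}\pmod 2$ and $m_{01}\equiv m_{11}\pmod 2$, and the vanishing $m_1m_2+m_1m_3+m_2m_3\equiv 0\pmod 2$. Substituting $m_1\equiv m_3$ and $m_2\equiv m_3$ into the last congruence reduces it to $m_3\equiv 0$, and hence $m_1,m_2,m_3$ are all even. The entire $s$-factor of $w(\pi)$ then collapses to $1$, leaving exactly \eqref{wpi}.

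The step I expect to demand the most care is the identification of the $s_1s_2$-coefficient in (2) with the intrinsic expression involving $\delta_1,\delta_2,\dim\pi,\dim\pi^{C_n\times C_n}$; it is purely combinatorial but requires carefully tracking the contribution of the realified characters to the total dimension while recording that they contribute nothing to the $s$-part of $w(\pi)$. The rest of the argument is formal manipulation in the graded ring $\ztwo[s_1,s_2,t_1,t_2]/(s_1^2,s_2^2)$ together with elementary determinantal facts about realifications.
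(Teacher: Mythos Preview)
Your proposal is correct and follows essentially the same route as the paper: start from the displayed product formula for $w(\pi)$, collapse the $s$-factors via $s_i^2=0$ to $(1+m_1s_1)(1+m_2s_2)(1+m_3s_1+m_3s_2)$, collapse the $t$-factors by parity to $(1+t_1)^{m_{10}}(1+t_2)^{m_{01}}(1+t_1+t_2)^{m_{11}}$, read off the degree $1$ and $2$ parts, and then identify $m_1+m_3\equiv\delta_1$, $m_2+m_3\equiv\delta_2$, $\dim\pi\equiv m_0+m_1+m_2+m_3$, $\dim\pi^{C_n\times C_n}=m_0$ to rewrite the $s_1s_2$-coefficient intrinsically. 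The only cosmetic difference is in part (3): you go on to deduce that $m_1,m_2,m_3$ are individually even, whereas the paper simply observes that the three coefficients $(m_1+m_3)$, $(m_2+m_3)$, $(m_1m_2+m_2m_3+m_3m_1)$ in the already-computed $s$-factor vanish mod $2$, so that factor equals $1$ outright---no need to separate $m_1,m_2,m_3$.
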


\begin{proof}
	Since $s_i^2=0$, the expression for $w(\pi)$ reduces to
	\begin{align*}
	w(\pi)&=(1+m_1s_1)\cdot (1+m_2s_2)\cdot (1+m_3s_1+m_3s_2)\cdot \prod\limits_{(j_1,j_2)\in S} (1+j_1t_1+j_2t_2)^{M_{j_1j_2}},
	\end{align*}
	where the set $S$ is as mentioned in \eqref{s}. Using Equations \eqref{eq5} and \eqref{eq6} one calculates 
	\begin{equation*}
	\prod\limits_{(j_1,j_2)\in S} (1+j_1t_1+j_2t_2)^{M_{j_1j_2}}=(1+t_1)^{m_{10}}\cdot (1+t_2)^{m_{01}}\cdot (1+t_1+t_2)^{m_{11}}.
	\end{equation*}
	We have
	\begin{equation}\label{eq7}
	\begin{split}
	w(\pi)=&\left(1+(m_1+m_3)s_1+(m_2+m_3)s_2+(m_1m_2+m_2m_3+m_3m_1)s_1s_2\right)\\
	&\cdot(1+t_1)^{m_{10}}\cdot (1+t_2)^{m_{01}}\cdot (1+t_1+t_2)^{m_{11}}.
	\end{split}
	\end{equation}

	This gives $w_1(\pi)=(m_1+m_3)s_1+(m_2+m_3)s_2$ and
	\begin{equation}\label{w2}
	w_2(\pi)=(m_1m_2+m_2m_3+m_3m_1)s_1s_2+(m_{10}+m_{11})t_1+(m_{01}+m_{11})t_2.
	\end{equation}
	We have
	\begin{enumerate}
		\item 
		$\delta_i\equiv m_i+m_3\pmod 2$ for $i\in\{1,2\}$,
		\item 
		$\dim \pi^{C_n\times C_n}=m_0$,
		\item 
		$\dim \pi\equiv m_0+m_1+m_2+m_3\pmod 2$.
	\end{enumerate}
	
	Using these facts one computes
	\begin{equation*}
	\delta_{1}\cdot \delta_{2}+\dim \pi+\dim \pi^{C_n\times C_n}+\delta_{1}+\delta_{2}\equiv  m_1m_2+m_1m_3+m_2m_3\pmod 2.
	\end{equation*}
	If $\pi$ is achiral then $\delta_i\equiv 0\pmod 2$ for $i\in\{1,2\}$. Also if $(m_1m_2+m_2m_3+m_3m_1)\equiv 0\pmod 2$ then $w(\pi)$ takes the required form. 
	
	

\end{proof}

Next we consider the case when $n\equiv 2\pmod 4$. We have 
$$H^*(C_n\times C_n,\ztwo)=\ztwo[v_1,v_2],$$
where $v_1=w_1(\sgn\otimes \mathbb{1})$ and $v_2=w_1(\mathbb{1}\otimes \sgn)$.
Since $C_2 \times C_2$ is the 2 sylow subgroup of $C_n \times C_n$ it 
detects the cohomology of $C_n\times C_n$.

Consider a real representation $\varphi$ of $C_n\times C_n$. Let
\begin{equation}\label{eq4}
\varphi \mid_{C_2 \times C_2} =  m_{00}\mathbb{1} \oplus m_{10}(\sgn \otimes \mathbb{1}) \oplus m_{01}( \mathbb{1} \otimes \sgn) \oplus m_{11}(\sgn \otimes \sgn).
\end{equation}

\begin{lemma}\label{character.formula.C2}
	For $x,y\in \{0,1\}$, we have
	$$m_{xy}=\frac{1}{4}\left(\chi_{\varphi}(0,0)+(-1)^x\chi_{\varphi}(n/2,0)+(-1)^y\chi_{\varphi}(0,n/2)+(-1)^{x+y}\chi_{\varphi}(n/2,n/2) \right).$$
\end{lemma}
\begin{proof}
	The proof is similar to that of Lemma \ref{character.formula}.	
\end{proof}

\begin{theorem}\label{q3tot}
	The total Stiefel Whitney class of $\varphi$ is 
	$$w(\varphi) = (1+v_1)^{m_{10}}(1+v_2)^{m_{01}}(1+v_1+v_2)^{m_{11}}.  $$
\end{theorem}
\begin{proof}
	The	proof follows immediately from Equation \eqref{eq4}.
\end{proof}

\subsection{Results for the Group $\GL_2(\F_q)$}

The cohomology of $\mathrm{GL}_2(\mathbb{F}_q)$ is detected by the diagonal subgroup $D\cong C_{q-1}\times C_{q-1}$ (see Theorem \ref{thm3}). 
Let $\pi$ be a real representation of $\GL_2(\F_q)$ and $q\equiv 1\pmod 4$. Then the expressions appeared in Lemma \ref{character.formula} for $\pi\mid_D$ become
\begin{enumerate}
	%
	\item 
	$m_{10}=m_{01}=\frac{1}{8}(\chi_{\pi}(z_1)-\chi_{\pi}(z_{-1}))$
	\item 
	$m_{11}=\frac{1}{8}(\chi_{\pi}(z_1)-2\chi_{\pi}(t_{(1,-1)})+\chi_{\pi}(z_{-1}))$,
\end{enumerate}
where $z_x=\mat x00x$ and $t_{(x,y)}=\mat x00y$. 
We write $w_i(\pi)$ as an abbreviation for $\res(w_i(\pi))=w_i(\pi|_D).$
%
Recall that for $q\equiv 1\pmod 4$ 
$$H^*(GL(2,\F_q)) = \ztwo[e_1,e_2,c_1,c_2]/(e_1^2=0, e_2^2=0), $$
where $e_1,e_2,c_1,c_2$ are elements of degree $1,3,2,4$ respectively (see Section \ref{intro}).

\begin{proof}[Proof of Theorem \ref{tsw}]
	
	By \cite[page 571]{quillen} we have an injection          $$\res:H^*(\GL_2(\F_q))\to H^*(D)^{S_2},$$
	where $S_2$ is the symmetric group of order 2 acting on $D$ by simply permuting the two diagonal entries.
	By degree considerations we have
	$$\res(e_1)=s_1+s_2,\quad \res(e_2)= s_1t_2+s_2t_1, \quad \res(c_1)=t_1+t_2 ,\quad \res(c_2)= t_1t_2.$$
	Using Lemma \ref{lem0} for the the representation $\varphi\mid_D$ one obtains $$w_2(\varphi)=(m_1m_2+m_2m_3+m_3m_1)s_1s_2+(m_{10}+m_{11})t_1+(m_{01}+m_{11})t_2.  $$
	But since $s_1s_2\notin H^*(\GL_2(\F_q))$ and $m_{01}=m_{10}$, we simply have $$w_2(\varphi)=(m_{10}+m_{11})(t_1+t_2). $$     
	This implies $m_1m_2+m_2m_3+m_3m_1 \equiv 0 \pmod 2$.
	Moreover  
	$$\delta=\delta_1=\delta_2= m_1+m_3 \pmod 2 =m_2+m_3 \pmod 2.$$
	Theorefore from Lemma \ref{lem0} we have 
	$$w(\varphi) = (1+\delta (s_1+s_2))((1+t_1)(1+t_2))^{m_{10}}(1+t_1+t_2)^{m_{11}} .$$
	
\end{proof}

We end this section with a discussion on total Stiefel Whitney classes of real representations of $\GL_2(\F_q)$, where $q\equiv 3\pmod 4$.
Consider the terms (following  \ref{character.formula.C2})
\begin{enumerate}
	%
	\item 
	$m_{10}=m_{01}=\frac{1}{4}(\chi_{\pi}(z_1)-\chi_{\pi}(z_{-1}))$
	\item 
	$m_{11}=\frac{1}{4}(\chi_{\pi}(z_1)-2\chi_{\pi}(t_{(1,-1)})+\chi_{\pi}(z_{-1}))$.
\end{enumerate}

\begin{lemma}\label{m10even}
	Let $\pi$ be a real representation of $GL_2(\F_q)$ where $q \equiv 3 \pmod 4$. Then the quantity
	$m_{10} = (\dim \pi - \chi_{\pi}(z_{-1}))/4$ is even.
\end{lemma}

\begin{proof}
	The result can be verified easily for OIRs of $\GL_2(\F_q)$.	
	Since $m_{10}$ is additive with respect to direct sum it follows for any real representation $\pi$ of $GL_2(\F_q)$.
\end{proof}

From [Totaro, page $30$, Theorem $2.23$] we have for $q\equiv 1\pmod 4$ 
$$H^*(GL(2,\F_q)) = \ztwo[e_1,e_2,c_1,c_2]/(e_1^2=c_{1}, e_2^2=c_1c_2), $$
where $e_1,e_2,c_1,c_2$ are elements of degree $1,3,2,4$ respectively.
We have 
$$\res(e_1)=v_1+v_2,\quad \res(c_1)=v_1^2+v_2^2,\quad \res(e_2)=(v_1+v_2)v_1v_2,\quad \res(c_2)=v_1^2v_2^2.$$

\begin{theorem}\label{tsw2}
	Let $\pi$ be a real representation of $\GL_2(\F_q)$, where $q\equiv 3\pmod 4$.  Then the total Stiefel Whitney class of $\pi$ is 
	\begin{equation*}
	w(\pi) = (1+c_1+c_2)^{m_{10}/2}(1+e_1)^{m_{11}}.
	\end{equation*}
\end{theorem}

\begin{proof}
	The proof follows from Lemma \ref{m10even} and Theorem \ref{q3tot} immediately.
\end{proof}

\section{Obstruction Classes for Real Representations of $C_n\times C_n$}\label{bicyclic}

We aim to calculate the obstruction classes of real representations of $C_n\times C_n$. Throughout the section we assume (unless otherwise mentioned) $\pi$ to be a real representation of $C_n\times C_n$ of the form as in \ref{eqpi}. Also assume that $w_1(\pi)=0$ and $m_1m_2+m_2m_3+m_3m_1\equiv 0\pmod 2$.

\begin{lemma}\label{lem1}
	We have
		\begin{align*}
		w_4(\pi)=&\binom{m_{10}+m_{11}}{2}t_1^2+\binom{m_{01}+m_{11}}{2}t_2^2\\
		&+\left(\binom{m_{01}+m_{11}}{2} + \binom{m_{10}+m_{11}}{2}+\binom{m_{01}+m_{10}}{2}\right)t_1t_2.
		\end{align*}
	\end{lemma}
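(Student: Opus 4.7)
The plan is to read off $w_4(\pi)$ directly from the factorization of the total Stiefel--Whitney class supplied by part (3) of Lemma \ref{lem0}. Since $w_1(\pi)=w_2(\pi)=0$ by hypothesis, that lemma gives
\beq
w(\pi)=(1+t_1)^{m_{10}}\cdot (1+t_2)^{m_{01}}\cdot (1+t_1+t_2)^{m_{11}}.
\eeq
Because $t_1,t_2\in H^2$, the class $w_4(\pi)$ is exactly the part of this product which is polynomially of total degree $2$ in $t_1,t_2$. Thus the whole task reduces to expanding the three factors through their quadratic terms and collecting the coefficients of $t_1^2$, $t_2^2$, and $t_1t_2$ modulo $2$.

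First I would compute the $t_1^2$ coefficient. The contributions come from: $\binom{m_{10}}{2}t_1^2$ in the first factor (with the constant terms of the others); from the product of the linear $t_1$-term in the first and the $t_1$-term of $(1+t_1+t_2)^{m_{11}}$, giving $m_{10}m_{11}$; and from $\binom{m_{11}}{2}(t_1+t_2)^2 = \binom{m_{11}}{2}(t_1^2+t_2^2)$ in the third factor, contributing $\binom{m_{11}}{2}$. Summing and applying the Vandermonde identity \eqref{van} gives
\beq
\binom{m_{10}}{2}+m_{10}m_{11}+\binom{m_{11}}{2}=\binom{m_{10}+m_{11}}{2}.
\eeq
By the obvious symmetry interchanging the two factors of $C_n\times C_n$, the $t_2^2$ coefficient is $\binom{m_{01}+m_{11}}{2}$.

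For the cross term $t_1t_2$, the relevant contributions are the three products of two linear terms, namely $m_{10}m_{01}$, $m_{10}m_{11}$, and $m_{01}m_{11}$; the degree-two contribution $\binom{m_{11}}{2}(t_1+t_2)^2$ has no $t_1t_2$ component modulo $2$. Thus the $t_1t_2$ coefficient is $m_{01}m_{10}+m_{10}m_{11}+m_{01}m_{11}\pmod 2$. Expanding each of the three binomials in the claimed formula via Vandermonde, the pure terms $\binom{m_{ij}}{2}$ appear twice and so vanish mod $2$, leaving precisely $m_{01}m_{10}+m_{10}m_{11}+m_{01}m_{11}$; the identity is then verified.

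There is no serious obstacle here; the computation is straightforward bookkeeping once the factorization from Lemma \ref{lem0} is in hand. The only point requiring a small amount of care is the cross-term, where one must both observe the mod-$2$ collapse $(t_1+t_2)^2=t_1^2+t_2^2$ and recognize the three-term sum as the mod-$2$ Vandermonde combination asserted in the statement.
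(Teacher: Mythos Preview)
Your proof is correct and follows essentially the same route as the paper: both start from the factorization $w(\pi)=(1+t_1)^{m_{10}}(1+t_2)^{m_{01}}(1+t_1+t_2)^{m_{11}}$ supplied by Lemma~\ref{lem0}, read off the degree-two part in $t_1,t_2$, and then use Vandermonde to rewrite the $t_1t_2$ coefficient $m_{01}m_{10}+m_{10}m_{11}+m_{01}m_{11}$ as the sum of three binomials modulo $2$. The only cosmetic difference is that you derive the $t_i^2$ coefficients via an explicit Vandermonde expansion, whereas the paper states them directly (the same combinatorial observation reappears later as Lemma~\ref{first2}).
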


\begin{proof}
	
	We collect the degree $4$ terms from $w(\pi)$ as in Equation \eqref{wpi} and obtain
	\begin{align*}
	w_4(\pi)&=\binom{m_{10}+m_{11}}{2}t_1^2+\binom{m_{01}+m_{11}}{2}t_2^2
	+(m_{01}m_{11}+ m_{10}m_{11} +m_{01}m_{10})t_1t_2.\\
	\end{align*}	
 By Vandermonde identity \ref{van} we have $$m_{01}m_{11}+ m_{10}m_{11} +m_{01}m_{10} \equiv \binom{m_{01}+m_{11}}{2} + \binom{m_{10}+m_{11}}{2}+\binom{m_{01}+m_{10}}{2} \pmod 2 .$$
\end{proof}


\begin{lemma} \label{first2} 
	The coefficient of $t_1^{2^{k-1}}$ in $w_{2^k}(\pi)$ is $\binom{m_{10} + m_{11}}{2^{{k-1}}} $ and that of $t_2^{2^{k-1}}$ is $\binom{m_{01} + m_{11}}{2^{{k-1}}} .$
\end{lemma}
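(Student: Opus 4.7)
The plan is to read off both coefficients directly from the closed-form expression for the total Stiefel Whitney class
\[
w(\pi) = (1+t_1)^{m_{10}} \cdot (1+t_2)^{m_{01}} \cdot (1+t_1+t_2)^{m_{11}}
\]
established in Lemma \ref{lem0}(3), which is available to us because of the standing hypothesis $w_1(\pi)=w_2(\pi)=0$. Since $t_1$ and $t_2$ have cohomological degree $2$, the monomial $t_1^{2^{k-1}}$ sits in degree $2^k$, so its coefficient in $w(\pi)$ is exactly its coefficient in $w_{2^k}(\pi)$.

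First I would isolate the ``pure $t_1$'' contribution. The factor $(1+t_2)^{m_{01}}$ must contribute its constant term $1$. The factor $(1+t_1)^{m_{10}}$ contributes $\binom{m_{10}}{a} t_1^a$. For the factor $(1+t_1+t_2)^{m_{11}}$, writing $(1+t_1+t_2)^{m_{11}} = \sum_{i} \binom{m_{11}}{i}(t_1+t_2)^i$ and expanding, the coefficient of the pure power $t_1^b$ (no $t_2$) is $\binom{m_{11}}{b}\binom{b}{b} = \binom{m_{11}}{b}$. Multiplying the three factors and collecting terms with total $t_1$-degree $2^{k-1}$ gives
\[
\sum_{a+b=2^{k-1}} \binom{m_{10}}{a}\binom{m_{11}}{b}.
\]
Applying Vandermonde's identity \eqref{van} reduces this to $\binom{m_{10}+m_{11}}{2^{k-1}}$, proving the first assertion. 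The coefficient of $t_2^{2^{k-1}}$ follows by the symmetric argument, interchanging the roles of the first and third factors with the second and third factors.

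There is really no obstacle here: the only subtlety is making sure the pure $t_1^b$ term of $(1+t_1+t_2)^{m_{11}}$ is correctly identified, and that all arithmetic is done modulo $2$ so that Vandermonde applies verbatim as stated. No interaction with the $s_i$ variables arises because those do not appear in \eqref{wpi}, and no relations in $H^*(C_n\times C_n,\ztwo)$ beyond $s_1^2=s_2^2=0$ are needed.
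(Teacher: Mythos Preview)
Your proof is correct and follows essentially the same approach as the paper: both start from the closed form $w(\pi)=(1+t_1)^{m_{10}}(1+t_2)^{m_{01}}(1+t_1+t_2)^{m_{11}}$ of Lemma~\ref{lem0} and extract the pure $t_1^{2^{k-1}}$ coefficient from the two factors containing $t_1$. The paper phrases this combinatorially (the coefficient is the number of ways to pick $2^{k-1}$ of the $m_{10}+m_{11}$ factors to contribute a $t_1$), while you expand each factor separately and collapse the resulting convolution with Vandermonde; these are two wordings of the same computation. One small remark: Vandermonde's identity \eqref{van} holds over the integers, so there is no need to invoke reduction mod~$2$ for that step.
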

\begin{proof}
	From Lemma \ref{lem0} we have $w(\pi) = (1+t_1)^{m_{10}}(1+t_2)^{m_{01}}(1+t_1+t_2)^{m_{11}}$.  Note that the term $t_1^{2^{k-1}}$ is obtained from the factor $(1+t_1)^{m_{10}}(1+t_1+t_2)^{m_{11}}$. Therefore the coefficient of $t_1^{2^{k-1}}$ is equal to the number of ways to choose $2^{k-1}$ factors out of $m_{10}+m_{11}$. Similar argument justifies the coefficient of $t_2^{2^{k-1}}$.	   	
\end{proof}


\begin{theorem} \label{obstruction}
	Consider a real achiral representation $\pi$ of $C_n \times C_n,\, n\equiv 0\pmod 4$, of the form \eqref{eqpi} such that $m_1m_2+m_2m_3+m_3m_1\equiv 0\pmod 2$. Let $k = \mathrm{min}(\upsilon_2(m_{01}+m_{11}), \upsilon_2(m_{10}+m_{11}), \upsilon_2(m_{01} + m_{10}+m_{11})+1)$. If $k=0$, then the obstruction class is $w_2(\pi)=t_1+t_2$. For $k\geq 1$, the obstruction class for $\pi$ is 
	\begin{align*}
	w_{2^{k+1}}(\pi)=& \left(\dfrac{m_{10} + m_{11}}{2^{{k}}}\right)\cdot t_1^{2^{k}}+ \left(\dfrac{m_{01} + m_{11}}{2^{{k}}} \right) \cdot t_2^{2^{k}}\\
	&+\left(\dfrac{m_{01}+m_{10}+m_{11}}{2^{k-1}}\right)\cdot  t_1^{2^{k-1}}t_2^{2^{k-1}}.
	\end{align*} 
	
\end{theorem}

\begin{proof}

We write the proof in two steps.\\	
		
\textbf{Step $1$}.	
We begin by proving that if $w_{2^i}(\pi) = 0$ for $i=0,1,\ldots r-1$ then 
 \begin{equation}\label{eq3.1}
\begin{split}
w_{2^{r}}(\pi)&=\binom{m_{10} + m_{11}}{2^{r-1}}t_1^{2^{r-1}}+\binom{m_{01} + m_{11}}{2^{r-1}}t_2^{2^{r-1}}\\
&+\left(\binom{m_{10} + m_{11}}{2^{r-1}} + \binom{m_{01} + m_{11}}{2^{r-1}}+\binom{m_{10} + m_{01}}{2^{r-1}}\right)t_1^{2^{r-2}}t_2^{2^{r-2}}.
\end{split}
\end{equation}

	First we prove that if $w_{2^l}(\pi) = 0$ for $l=0,1,\ldots,r-1$, then the coefficient of $t_1^i t_2^{2^{r-1}-i}$ in $w_{2^r}(\pi)$ is even except when $i\in \{0,2^{r-2},2^{r-1}\}$. Moreover when $i=2^{r-2}$ then it is $$\left(\binom{m_{10} + m_{11}}{2^{{r-1}}} + \binom{m_{01} + m_{11}}{2^{{r-1}}}+\binom{m_{10} + m_{01}}{2^{{r-1}}}\right).$$
	
	We proceed by induction. The base case $w_4(\pi)$ is proved in Lemma \ref{lem1}. Assume $w_{2^i}(\pi) = 0$ for $i=0,1,\ldots,r-1$. Moreover, assume the statement (\eqref{eq3.1}) is true for $j$ where $1\leq j\leq r-1$. In particular for every $j$ for $1 \leq j \leq r-1$  we have
	\begin{equation}\label{eq3}
	\begin{split}
	w_{2^{j}}(\pi)&=\binom{m_{10} + m_{11}}{2^{j-1}}t_1^{2^{j-1}}+\binom{m_{01} + m_{11}}{2^{j-1}}t_2^{2^{j-1}}\\
	&+\left(\binom{m_{10} + m_{11}}{2^{j-1}} + \binom{m_{01} + m_{11}}{2^{j-1}}+\binom{m_{10} + m_{01}}{2^{j-1}}\right)t_1^{2^{j-2}}t_2^{2^{j-2}}.
	\end{split}
	\end{equation}
	By Equation \eqref{eq3}  we have $\binom{m_{10}+m_{11}}{2^i} \equiv 0 \pmod2$, $\binom{m_{01}+m_{11}}{2^i} \equiv 0 \pmod2$ and $\binom{m_{10}+m_{01}}{2^i} \equiv 0 \pmod2$ for $i=0,1,\ldots ,r-2$. Then by Proposition \ref{binomind} we get $2^{r-1}$ divides all the three quantities $m_{10}+m_{11},  m_{01}+m_{11}, m_{10} + m_{01}$.
		This indeed means $2^{r-2} \mid m_{10}, m_{01}, m_{11}$.
	By Lemma \ref{lem0} we have 
\begin{equation*}
w(\pi)=(1+t_1)^{m_{10}}\cdot (1+t_2)^{m_{01}}\cdot (1+t_1+t_2)^{m_{11}}.
\end{equation*}
Expanding the right hand side we have
	\begin{equation*}
	\left(\sum_{a=0}^{m_{10}} \binom{m_{10}}{a}t_1^a\right)\cdot \left(\sum_{b=0}^{m_{01}} \binom{m_{01}}{b}t_2^b\right)\cdot \left(\sum_{l=0}^{m_{11}} \binom{m_{11}}{l}(t_1+t_2)^l\right).
	\end{equation*}

%
%

	The coefficient of $t_1^it_2^{2^{r-1}-i}$ is 
	
	\begin{equation*}
	\sum_{\{a,b \mid 0\leq a \leq i, 0\leq b \leq 2^{r-1}-i\}}\left( \binom{m_{10}}{a}\right)\cdot \left( \binom{m_{01}}{b}\right)\cdot \left( \binom{m_{11}}{2^{r-1}-(a+b)}\binom{2^{r-1}-(a+b)}{i-a}\right).
	\end{equation*}
	We have $2^{r-2} \mid m_{01}, m_{10}, m_{11}$.
	If $i\neq 2^{r-2}$, then either $i$ or $2^{r-1}-i  $ is strictly less than $2^{r-2}$, either $a$ or $b$ or $2^{r-1}- (a+b)$ is strictly less than $2^{r-2}$ and non zero. Thus one of $\binom{m_{10}}{a}$	or $\binom{m_{01}}{b}$ or $\binom{m_{11}}{2^{r-1}-(a+b)}$ is even , hence the whole summation is even when $i\neq 2^{r-2}$.
	When $i=2^{r-2}$ then $(a=0, b=0)$, $(a=2^{r-2}, b=2^{r-2})$,$(a=0,b=2^{r-2})$,$(a=2^{r-2},b=0)$ these terms survive and they give the desired expression. Here we use Vandermonde identity \eqref{van} to obtain 
	\begin{align*}
	{}&\binom{m_{10}}{2^{r-2}}\cdot \binom{m_{01}}{2^{r-2}}+\binom{m_{10}}{2^{r-2}}\cdot \binom{m_{11}}{2^{r-2}}+\binom{m_{01}}{2^{r-2}}\cdot \binom{m_{11}}{2^{r-2}}\\
	&\equiv \binom{m_{10}+m_{01}}{2^{r-1}}+\binom{m_{10}+m_{11}}{2^{r-1}}+\binom{m_{01}+m_{11}}{2^{r-1}}\pmod 2.
	\end{align*}
	We obtain the coefficients for $t_i^{2^{r-1}}$ by Lemma \ref{first2} for $1\leq i\leq 2$.
	
\textbf{Step $2$}. Note that $2^{r-1}$ divides the three quantities $m_{10}+m_{11}, m_{01}+m_{11}, m_{10}+m_{01}$. Using Proposition \ref{binom1} we have 

\begin{equation}\label{eq10}
\begin{split}
w_{2^r}(\pi)=& \left(\dfrac{m_{10} + m_{11}}{2^{{r-1}}}\right)\cdot t_1^{2^{r-1}}+ \left(\dfrac{m_{01} + m_{11}}{2^{{r-1}}} \right) \cdot t_2^{2^{r-1}}\\
&+\left(\dfrac{m_{10} + m_{11}}{2^{{r-1}}} + \dfrac{m_{01} + m_{11}}{2^{{r-1}}}+\dfrac{m_{10} + m_{01}}{2^{{r-1}}}\right)\cdot  t_1^{2^{r-2}}t_2^{2^{r-2}},
\end{split}
\end{equation}
provided $w_i(\pi)=0$, for $1\leq i\leq 2^{r-1}$. If
$$k=\mathrm{min}(\upsilon_2(m_{01}+m_{11}), \upsilon_2(m_{10}+m_{11}), \upsilon_2(m_{01} + m_{10}+m_{11})+1),$$
then for $r<k+1$, the coefficients of $t_1^{2^{r-1}}, t_2^{2^{r-1}}, t_1^{2^{r-2}}t_1^{2^{r-2}}$ in Equation \eqref{eq10} are even.
Therefore $w_{2^{k+1}}(\pi)$ is the obstruction class. Hence we have the  expression for $w_{2^{k+1}}(\pi)$ by putting $r=k+1$ in Equation \eqref{eq10}.
\end{proof}

The analogous result for $n\equiv 2\pmod 4$ is as follows:

\begin{theorem}\label{c2obs}
Consider a real representation $\varphi$ of $C_n \times C_n$ where $n \equiv 2 \pmod 4$, as in \eqref{eq4}. Let $m_{01}, m_{10}, m_{11}$ be as in Lemma \ref{character.formula.C2} and take 
$$k=\mathrm{min}(\upsilon_2(m_{01}+m_{11}), \upsilon_2(m_{10}+m_{11}), \upsilon_2(m_{01} + m_{10}+m_{11})+1).$$
Then the obstruction class for $\pi$ is 
\[
w_{2^k}(\varphi)=
\begin{cases}

v_1+v_2,\quad\text{if $k=0$},\\\\
\left( \dfrac{m_{10} + m_{11}}{2^{{k}}} \right) v_1^{2^{k}}+ \left(\dfrac{m_{01} + m_{11}}{2^{{k}}} \right) v_2^{2^{k}}
	+\left(\dfrac{m_{10}+m_{01}+  m_{11}}{2^{{k-1}}} \right) v_1^{2^{k-1}}v_2^{2^{k-1}},\quad\text{if $k\geq 1$}.
\end{cases}	
\]		
	
\end{theorem}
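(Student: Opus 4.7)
The plan is to follow essentially the same strategy as in Theorem \ref{obstruction}, exploiting two simplifications specific to $n \equiv 2 \pmod 4$: the cohomology ring $H^*(C_n \times C_n, \ztwo) = \ztwo[v_1, v_2]$ is a genuine polynomial ring (no $s_i^2 = 0$-type relations), and each $v_i$ sits in cohomological degree $1$ rather than $2$, which shifts the obstruction from degree $2^{k+1}$ to degree $2^k$. First I would invoke Proposition \ref{sylow.spin} together with Proposition \ref{sylow1}: since $n \equiv 2 \pmod 4$, the $2$-Sylow subgroup of $C_n \times C_n$ is $C_2 \times C_2$, so the restriction $H^*(C_n \times C_n, \ztwo) \to H^*(C_2 \times C_2, \ztwo)$ is injective. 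By \eqref{eq4}, additivity \eqref{cup}, and the standard identifications $w(\sgn \otimes \mathbb{1}) = 1 + v_1$, $w(\mathbb{1} \otimes \sgn) = 1 + v_2$, $w(\sgn \otimes \sgn) = 1 + v_1 + v_2$, it then suffices to analyze
\[
w(\pi) = (1 + v_1)^{m_{10}} (1 + v_2)^{m_{01}} (1 + v_1 + v_2)^{m_{11}}
\]
in $\ztwo[v_1, v_2]$.

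Next I would prove by induction on $r$ the following analogue of Step $1$ in Theorem \ref{obstruction}: if $w_{2^l}(\pi) = 0$ for $l = 1, \ldots, r-1$, then
\begin{align*}
w_{2^r}(\pi) &= \binom{m_{10} + m_{11}}{2^r} v_1^{2^r} + \binom{m_{01} + m_{11}}{2^r} v_2^{2^r} \\
&\quad + \left( \binom{m_{10} + m_{11}}{2^r} + \binom{m_{01} + m_{11}}{2^r} + \binom{m_{10} + m_{01}}{2^r} \right) v_1^{2^{r-1}} v_2^{2^{r-1}}.
\end{align*}
Expanding the triple product, the coefficient of $v_1^i v_2^{2^r - i}$ is
\[
\sum_{a, b} \binom{m_{10}}{a} \binom{m_{01}}{b} \binom{m_{11}}{2^r - a - b} \binom{2^r - a - b}{i - a}.
\]
Proposition \ref{binomind} converts the vanishing hypothesis $w_{2^l}(\pi) = 0$ for $l < r$ into the divisibilities $2^{r-1} \mid m_{10} + m_{11},\ m_{01} + m_{11},\ m_{10} + m_{01}$, from which $2^{r-2} \mid m_{10}, m_{01}, m_{11}$ individually. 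Then, exactly as in Theorem \ref{obstruction}, each summand vanishes mod $2$ unless $i \in \{0, 2^{r-1}, 2^r\}$, and the three surviving contributions to the mixed coefficient combine via Vandermonde's identity \eqref{van} to give the claimed expression.

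Finally, with $k = \min(\Ord_2(m_{01} + m_{11}),\ \Ord_2(m_{10} + m_{11}),\ \Ord_2(m_{01} + m_{10} + m_{11}) + 1)$, all three binomial coefficients in the inductive formula are even for each $r < k$, so $w_{2^r}(\pi) = 0$ for $1 \le r < k$; combined with the power-of-$2$ degree property of the obstruction class \cite[Problem 8-B]{mch}, this forces $w_i(\pi) = 0$ for every $0 < i < 2^k$. At $r = k$ the minimality of $k$ ensures at least one of the three coefficients is odd, and Proposition \ref{binom1} replaces them by $(m_{10} + m_{11})/2^k$, $(m_{01} + m_{11})/2^k$, and $(m_{10} + m_{01} + m_{11})/2^{k-1}$ modulo $2$, producing the stated formula. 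The main technical obstacle is the bookkeeping in the induction step, particularly verifying that the asymmetric ``$+1$'' appearing in the $\Ord_2(m_{01} + m_{10} + m_{11}) + 1$ term of $k$ correctly matches the appearance of $2^{r-1}$ (rather than $2^r$) as the exponent of $v_1 v_2$ in the mixed term; aside from the degree-shift, this matches the argument of Theorem \ref{obstruction} verbatim.
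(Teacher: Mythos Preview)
Your approach is exactly the paper's: it computes $w(\pi)=(1+v_1)^{m_{10}}(1+v_2)^{m_{01}}(1+v_1+v_2)^{m_{11}}$, checks the $w_2$ base case via Vandermonde, and then says ``the proof follows by induction and is similar to the proof of Theorem \ref{obstruction}.'' You have simply written out that sketch in more detail.

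One index correction, though, since you flagged the bookkeeping as the delicate point. Because $\deg v_i=1$, the coefficient of $v_1^{2^l}$ in $w_{2^l}(\pi)$ is $\binom{m_{10}+m_{11}}{2^l}$, not $\binom{m_{10}+m_{11}}{2^{l-1}}$ as in the $t_i$-setting. Hence the vanishing hypothesis feeding into Proposition \ref{binomind} must include $l=0$ (i.e.\ $w_1(\pi)=0$, which holds once $k\ge 1$), and one obtains $2^{r}\mid m_{10}+m_{11},\ m_{01}+m_{11},\ m_{10}+m_{01}$ and therefore $2^{r-1}\mid m_{10},m_{01},m_{11}$ --- one power of $2$ higher than the $2^{r-1}$ and $2^{r-2}$ you wrote. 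This matters: with only $2^{r-2}\mid m_{10},m_{01},m_{11}$, triples such as $(a,b,c)=(3\cdot 2^{r-2},\,2^{r-2},\,0)$ survive and the coefficient of $v_1^{3\cdot 2^{r-2}}v_2^{2^{r-2}}$ need not vanish. With the corrected divisibilities the induction step goes through exactly as in Theorem \ref{obstruction}.
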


\begin{proof}

The proof is similar to that of Theorem \ref{obstruction}. 
\end{proof}

\section{Obstruction class
	of Real Representations of $\mathrm{GL}_2(\mathbb{F}_q)$}\label{5}

Recall that a real representation $\pi$ of a finite group $G$ is achiral  iff $w_1(\pi)=0$ (see Definition \ref{achiral}.
We apply the results for bicyclic groups  in Section \ref{bicyclic} to calculate obstruction classes for achiral representations $\pi$ of $\mathrm{GL}_2(\mathbb{F}_q)$. Note that for a chiral real representation $\pi$ of $\mathrm{GL}_2(\mathbb{F}_q)$ the obstruction class is $w_1(\pi)$.  

\subsection{Catalogue of Irreducible Representations of $\GL_2(\F_q)$}

We follow the notation of \cite[Section $5.1$]{spjoshi} to enumerate the irreducible representations of $G=\GL_2(\mb F_q)$. Let $\sgn_G=\sgn\circ \det_G$, where $\det_G:\GL_2(\F_q)\to \F_q^{\times}$ denotes the determinant map. We have $\mathrm{Hom}(G,\ztwo)\cong H^1(G, \ztwo)$. Therefore $\sgn_G\in H^1(G, \ztwo)$.

%
%
%

The irreducible representations of $G$ are as follows:

\begin{enumerate}
	\item The linear characters
	\item The principal series representations $\pi(\chi,\chi')$, with $\chi \neq \chi'$  characters of $\mb F_q^\times$
	\item Twists $ \St_G \otimes \chi$ of the Steinberg for a linear character $\chi$
	\item The cuspidal representations $\pi_\theta$, with $\theta$ a  regular character of anisotropic torus  $T$.
\end{enumerate}


The irreducible orthogonal representations of $G$ are:
\begin{enumerate}
	\item ${\mathbb 1}$ and $\sgn_G$
	\item  $\pi({\mathbb 1},\sgn)$
	\item  $\pi(\chi,\chi^{-1})$ with $\chi$ not quadratic.
	\item $\St_G$ and $\St_G \otimes \sgn_G$
	\item  $\pi_\theta$, where $\theta^\tau =\theta^{-1}$, where $\tau\in \mathrm{Gal}(\F_{q^2}/\F_q)$ is the nontrivial element.
\end{enumerate}


\subsection{Proof of the Main Theorem}

Next we give a proof of our main theorem \ref{main}.

\begin{proof}[Proof of Theorem \ref{main}]
We apply Theorem \ref{obstruction} to the representation $\rho\mid_D$.
From Equation  \eqref{eq10} we have	

\begin{equation}\label{eq11}
\begin{split}
w_{2^r}(\rho)=& \left(\dfrac{m_{10} + m_{11}}{2^{{r-1}}}\right)\cdot t_1^{2^{r-1}}+ \left(\dfrac{m_{01} + m_{11}}{2^{{r-1}}} \right) \cdot t_2^{2^{r-1}}\\
&+\left(\dfrac{m_{10} + m_{11}}{2^{{r-1}}} + \dfrac{m_{01} + m_{11}}{2^{{r-1}}}+\dfrac{m_{10} + m_{01}}{2^{{r-1}}}\right)\cdot  t_1^{2^{r-2}}t_2^{2^{r-2}},
\end{split}
\end{equation}
provided $w_i(\rho)=0$, for $1\leq i\leq 2^{r-1}$. Since $m_{10}=m_{01}$ one has
\begin{equation}\label{eq12}
w_{2^r}(\rho)=
 \left(\dfrac{m_{01} + m_{11}}{2^{{r-1}}}\right)(t_1^{2^{r-1}}+t_2^{2^{r-1}})
+\left(\dfrac{2m_{01}}{2^{r-1}}\right)\cdot t_1^{2^{r-2}}t_2^{2^{r-2}}.
\end{equation}
If
$$k=\mathrm{min}(\upsilon_2(m_{01}+m_{11}),  \upsilon_2(m_{01})+1),$$
then for $r<k+1$, the coefficients of $t_1^{2^{r-1}}, t_2^{2^{r-1}}, t_1^{2^{r-2}}t_1^{2^{r-2}}$ in Equation \eqref{eq12} are even.
Therefore $w_{2^{k+1}}(\pi)$ is the obstruction class.
\end{proof}

%
%


The next result gives the obstruction classes for real representations of $\GL_2(\F_q)$, where $q \equiv 3 \pmod 4$.

\begin{theorem} \label{main2}
	Let $\rho$ be an orthogonal representation of $\GL_2(\F_q)$, with $q \equiv 3 \pmod 4$.
	Let $m_{01}= m_{10}, m_{11}$ be as defined above. Suppose $$k=\min{\{\upsilon_2(m_{01}+m_{11}),\upsilon_2(m_{01})+1\}}.$$ 
	Then the obstruction class is
\[
w_{2^{k}}(\rho) =
\begin{cases}
e_1,\quad\text{if $k=0$},\\\\
c_1,\quad\text{if $k=1$},\\\\
\left(\dfrac{m_{01} + m_{11}}{2^k}\right)e_1^{2^{k}} +\left(\dfrac{ m_{01}}{2^{k-1}}\right)c_2^{2^{k-2}},\quad\text{if $k\geq 2$}.
\end{cases}
\]	
\end{theorem}

\begin{proof}
	The proof is analogous to that of Theorem \ref{main}.
\end{proof}

We aim to calculate the first and second Stiefel Whitney classes of orthogonally irreducible representations of $G$.
The paper \cite{spjoshi} gives criteria to determine spinorial representations  of $G$. We know that an orthogonal representation $\pi$ of $G$ is spinorial iff $w_2(\pi)=w_1(\pi)\cup w_1(\pi)$ (see Proposition \ref{spin.swc}). Moreover $H^2(GL_2(\F_q), \ztwo) = \ztwo$ (see \cite{spjoshi}). Hence if we know $w_1$ we also can derive $w_2$.

\subsection{Calculation of $w_1$}

According to \cite[Proposition 29.2] {bushnell}, if $H$ is a subgroup of a finite group $G$, and $\rho$ is a representation of $H$, then we have
\begin{equation} \label{Bush}
\det(\Ind_H^G(\rho)) =\det(\Ind_H^G {\mathbb{ 1}})^{\dim \rho}\cdot (\det \rho \circ \ver_{G/H}),
\end{equation}
where
\begin{equation}
\ver_{G/H} : G/D(G) \to H/D(H)
\end{equation}
is the usual ``verlagerung" or transfer map, with $D(G)$ the derived subgroup of $G$.

We recall that given $g \in G$ we have 

\begin{equation} \label{ver3}
\ver_{G/H}(g \cdot D(G)) = \prod_{x\in G/H} h_{x,g} \cdot D(H).
\end{equation}

Here $h_{x,g}$ is defined as follows: Pick a section $t: G/H \to G$  of the canonical projection.
Given $g \in G$ and   $x \in G/H$ there is a $y \in G/H$ and an $h_{x,g} \in H$ such that $gt(x) = t(y)h_{x,g}$.

Let $B$ denote the Borel subgroup of $GL_2(\F_q)$.

\begin{lemma} \label{ver1} The transfer map corresponding to the subgroup $B<G$ is given by
	\begin{equation}
	\ver_{G/B}(g  \: \Mod D(G))=\mat{\det g}00{\det g}  \: \Mod D(B).
	\end{equation}
\end{lemma}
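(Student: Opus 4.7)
The plan is to verify the formula by a direct computation of $\ver_{G/B}$ on a convenient set of generators of $G^{\mathrm{ab}}=G/D(G)$, exploiting the fact that both source and target of $\ver$ are abelian. First I would identify the abelianizations: since $D(G)=\SL_2(\F_q)$ (assuming $q>3$; the small cases can be handled separately), the determinant induces an isomorphism $G/D(G)\overset{\sim}{\to}\F_q^\times$; and $D(B)=U$, the unipotent radical, so projection to the diagonal gives $B/D(B)\cong T$. Because $\ver_{G/B}$ is a group homomorphism between these, it suffices to check the formula for the explicit lifts $g_a:=\mat a001$, $a\in\F_q^\times$, of generators of $G/D(G)$.

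Next I would pick coset representatives for $G/B\cong\mathbb{P}^1(\F_q)$: the lower-triangular unipotents $t_c=\mat 10c1$ for $c\in\F_q$, together with $t_\infty=\mat 0110$. For each $x$ I would compute $g_a\cdot t(x)$ and rewrite it in the form $t(y)\cdot h_{x,g_a}$ with $h_{x,g_a}\in B$, as in \eqref{ver3}. A quick matrix calculation shows $h_{c,g_a}=g_a=\mat a001$ for every $c\in\F_q$ (with $y=c/a$), while $h_{\infty,g_a}=\mat 100a$.

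The last step is to collect the product in $B$ modulo $D(B)=U$. The $q$ factors from $c\in\F_q$ contribute $g_a^q=\mat{a^q}001$, and multiplying by the $\infty$-factor yields $\mat{a^q}00a$. The decisive simplification is Fermat's little theorem: $a^q=a$ in $\F_q$, which collapses this to the scalar matrix $\mat a00a=\mat{\det g_a}00{\det g_a}$ in $B/D(B)$. The formula for arbitrary $g\in G$ then follows from the homomorphism property of $\ver$.

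The main (modest) obstacle is bookkeeping: choosing the coset representatives so that the equation $g_a\,t(x)=t(y)\,h_{x,g_a}$ can be read off without ambiguity, and remembering that the product of the $h_{x,g_a}$ is canonical only after reducing modulo $D(B)$ (so that the order of multiplication is irrelevant). As a sanity check one can verify the general identity $\det\circ\ver_{G/B}=[G:B]\cdot\mathrm{id}$ on $G^{\mathrm{ab}}$: here $[G:B]=q+1\equiv 2\pmod{q-1}$, which is consistent with $\det\mat a00a=a^2$.
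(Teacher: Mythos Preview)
Your proof is correct and follows essentially the same approach as the paper: the same coset representatives $t_c=\mat 10c1$ and $t_\infty=\mat 0110$ for $G/B\cong\mathbb{P}^1(\F_q)$, the same test elements $g_a=\mat a001$ lifting generators of $G/D(G)$, the same identification of $h_{c,g_a}=g_a$ and $h_{\infty,g_a}=\mat 100a$, and the same appeal to $a^q=a$ to collapse the product to $\mat a00a$. Your additional remarks on $D(B)=U$ and the sanity check via $\det\circ\ver_{G/B}$ are nice but not needed for the argument.
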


\begin{proof} 
	
	It is well-known that $G/B = \underset{x \in \F_q}{\bigcup}\mat 10x1 B\bigcup \mat 0110B$. 
	 
	Define $t': G/B \to G$ by
	\begin{equation}
	t'(\alpha)= \begin{cases} 
	\mat 10x1 \text{ if $\alpha \in \mat 10x1B$}, \\ \\
	\mat 0110 \text{ if $\alpha\in\mat 0110B$}. \\
	\end{cases}
	\end{equation}
	
	Since $D(G)= \SL_2(\F_q)$, we may take $g=\mat a001$ with $a \in \F_q^\times$.
	Let $\alpha = \mat 10x1B$. Then $g t'(\alpha)=t'(\beta) h_{\alpha,g}$ where $\beta = \mat 10{x/a}1B$ and $h_{\alpha,g}=\mat a001$.
	For $\alpha=\mat 0110$ we have $gt'(\alpha)=t'(\alpha)h_{\alpha,g}$ with $h_{\alpha,g}=\mat 100a$.
	Thus $\ver_{G/B}\left( \mat a001 \right)= \mat {a^q}00a = \mat a00a$, whence the proposition.
\end{proof}

\begin{lemma} \label{some.dets} We have
	\begin{enumerate}
		\item $\det \Ind_B^G {\bold 1}=\sgn_G$
		\item $\det \St_G=\sgn_G$
		\item $\det \pi({\bold 1},\sgn)={\bold 1}$.
	\end{enumerate}
\end{lemma}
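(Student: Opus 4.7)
The plan is to establish $(1)$ first, deduce $(2)$ at once, and then derive $(3)$ by combining $(1)$ with Lemma~\ref{ver1} and formula \eqref{Bush}.

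For $(1)$, recognize $\Ind_B^G \mathbf{1}$ as the permutation representation of $G$ on $G/B \cong \mathbb{P}^1(\mathbb{F}_q)$, so $\det(\Ind_B^G \mathbf{1})$ is the sign-of-permutation character of this action. Both $\det(\Ind_B^G \mathbf{1})$ and $\sgn_G$ are trivial on $\SL_2(\mathbb{F}_q)$ (the latter by definition; the former because $\SL_2(\mathbb{F}_q)$ acts through $\mathrm{PSL}_2(\mathbb{F}_q)$, whose abelianization admits no nontrivial map to $\{\pm 1\}$ for any odd $q$). Hence it suffices to compare them on a single element of nontrivial determinant: take $g = \mat{a}{0}{0}{1}$ with $a$ a generator of $\mathbb{F}_q^\times$. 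This $g$ fixes $[0:1]$ and $[1:0]$, and permutes the remaining $q-1$ points $\{[1:y] : y \in \mathbb{F}_q^\times\}$ as $y \mapsto a^{-1}y$. Since $a^{-1}$ also generates $\mathbb{F}_q^\times$, this is a single $(q-1)$-cycle, whose sign is $(-1)^{q-2}=-1$; this matches $\sgn_G(g)=-1$, as any generator of $\mathbb{F}_q^\times$ is a non-square when $q$ is odd. This proves $(1)$.

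Part $(2)$ is immediate from the decomposition $\Ind_B^G \mathbf{1} = \mathbf{1} \oplus \St_G$ and multiplicativity of determinant: $\det \St_G = \det(\Ind_B^G \mathbf{1}) = \sgn_G$. For $(3)$, write $\pi(\mathbf{1},\sgn) = \Ind_B^G \chi$, where $\chi: B \to \mathbb{C}^\times$ factors through the diagonal torus $T$ as $\mat{a}{0}{0}{b} \mapsto \sgn(b)$. Applying \eqref{Bush} with $\rho = \chi$ gives $\det \pi(\mathbf{1},\sgn) = \det(\Ind_B^G \mathbf{1}) \cdot (\chi \circ \ver_{G/B})$; by $(1)$ the first factor is $\sgn_G$, and by Lemma~\ref{ver1}, $(\chi \circ \ver_{G/B})(g) = \chi(\mat{\det g}{0}{0}{\det g}) = \sgn(\det g) = \sgn_G(g)$, so $\det \pi(\mathbf{1},\sgn) = \sgn_G \cdot \sgn_G = \mathbf{1}$.

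The main technical point is the cycle computation in $(1)$: verifying the sign of a $(q-1)$-cycle and the non-square status of the chosen generator. The remaining steps are formal applications of \eqref{Bush} and Lemma~\ref{ver1}.
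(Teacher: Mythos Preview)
Your proposal is correct and follows essentially the same route as the paper's proof: identify $\Ind_B^G\mathbf{1}$ with the permutation representation on $G/B\cong\mathbb{P}^1(\mathbb{F}_q)$, compute the sign of the diagonal generator $\mat{a}{0}{0}{1}$ as a $(q-1)$-cycle on the complement of two fixed points, deduce $(2)$ from $\Ind_B^G\mathbf{1}=\mathbf{1}\oplus\St_G$, and obtain $(3)$ via \eqref{Bush} and Lemma~\ref{ver1}. Your extra remark that both characters are trivial on $\SL_2(\mathbb{F}_q)$ is a slight over-justification (any linear character of $G$ automatically factors through $G/D(G)=G/\SL_2(\mathbb{F}_q)$), but the argument is sound.
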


\begin{proof} 
	Note that $\Ind_B^G {\mathbb{1}}$ equals $\C[G/B]$, the permutation representation of $G$ corresponding to its action on $G/B$. Let $a$ be a generator of $\mb F_q^\times$ and $g_a=\mat a001$. Then $g_a$ fixes $\mat 1001B, \mat 0110B\in G/B$ and acts like a $(q-1)$ cycle on the complement. It follows that $\det(g_a; \C[G/B])=-1$ and therefore $\det \Ind_B^G {\mathbb{1}}=\sgn_G$ as claimed. This entails that $\det \St_G=\sgn_G$ as well.
	Finally by \eqref{Bush}, we have
	\begin{equation}
	\begin{split}
	\det(\Ind_B^G({\mathbb{ 1}} \boxtimes \sgn)) &=\det(\Ind_B^G {\mathbb{1}}) \cdot ( ({\mathbb{1}} \boxtimes \sgn) \circ \ver_{G/H}) \\
	&=\sgn_G \cdot \sgn_G\\
	&= {\mathbb{1}}.
	\end{split}
	\end{equation}
\end{proof}

%
%

\begin{lemma}
	We have $$w_1(\Ind_B^G(\chi \otimes \chi^{-1}))=\sgn_G.$$
\end{lemma}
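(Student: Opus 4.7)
The plan is to compute the determinant character of $\Ind_B^G(\chi \otimes \chi^{-1})$ and appeal to the fact that for an orthogonal complex representation $\pi$ of $G$, the first Stiefel--Whitney class $w_1(\pi)$ is identified, under the isomorphism $\mathrm{Hom}(G,\Z/2\Z) \xrightarrow{\sim} H^1(G,\Z/2\Z)$, with the sign character $\det \pi : G \to \{\pm 1\}$. So the task reduces to showing that $\det(\Ind_B^G(\chi \otimes \chi^{-1})) = \sgn_G$ as a linear character of $G$.

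For this, the plan is to apply the determinant-of-induction formula \eqref{Bush} with $H = B$ and $\rho = \chi \otimes \chi^{-1}$. Since $\rho$ is one-dimensional, the exponent $\dim \rho$ is $1$, so
\begin{equation*}
\det(\Ind_B^G(\chi \otimes \chi^{-1})) = \det(\Ind_B^G \mathbb{1}) \cdot \bigl((\chi \otimes \chi^{-1}) \circ \ver_{G/B}\bigr).
\end{equation*}
The first factor is $\sgn_G$ by Lemma \ref{some.dets}(1). For the second factor, Lemma \ref{ver1} gives
\begin{equation*}
\ver_{G/B}(g \Mod D(G)) = \mat{\det g}{0}{0}{\det g} \Mod D(B),
\end{equation*}
so applying the character $\chi \otimes \chi^{-1}$ to this diagonal element yields $\chi(\det g)\,\chi^{-1}(\det g) = 1$.

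Thus the second factor is trivial, leaving $\det(\Ind_B^G(\chi \otimes \chi^{-1})) = \sgn_G$, and therefore $w_1(\Ind_B^G(\chi \otimes \chi^{-1})) = \sgn_G$ in $H^1(G,\Z/2\Z)$. There is no real obstacle here; the key observation is the cancellation $\chi(\det g)\chi^{-1}(\det g) = 1$ in the transfer factor, which reflects the fact that the verlagerung lands in the ``diagonal'' $\{(a,a)\} \subset B/D(B) \cong \F_q^\times \times \F_q^\times$, on which the character $\chi \otimes \chi^{-1}$ is trivial.
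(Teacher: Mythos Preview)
Your proof is correct and follows essentially the same approach as the paper: apply the determinant-of-induction formula \eqref{Bush}, use Lemma \ref{some.dets}(1) for the first factor, and use Lemma \ref{ver1} to see that the transfer factor $(\chi\otimes\chi^{-1})\circ\ver_{G/B}$ is trivial via the cancellation $\chi(\det g)\chi^{-1}(\det g)=1$.
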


\begin{proof}
Now consider the representation $\chi\otimes\chi^{-1}$ of $B$. We have
$$\chi\otimes\chi^{-1}\begin{pmatrix}
a_{11} & a_{12}\\
0 & a_{22}
\end{pmatrix}=\chi(a_{11})\chi^{-1}(a_{22}).$$
From Equation \eqref{Bush} we obtain
$$\det(\mathrm{Ind}_B^G(\chi\otimes\chi^{-1}))=\det(\mathrm{Ind}_B^G\mathbb{1})^{\dim\chi\otimes\chi^{-1}}\cdot (\det(\chi\otimes\chi^{-1})\circ \mathrm{ver}_{G/B}).$$
We have $\det(\mathrm{Ind}_B^G\mathbb{1})=\sgn_G$. Using Lemma \ref{ver1} we compute 
\begin{align*}
\det(\chi\otimes\chi^{-1})\circ \mathrm{ver}_{G/B}&=\chi(\det g)\cdot\chi^{-1}(\det g)\\
&=1.
\end{align*}
Therefore $w_1(\mathrm{Ind}_B^G(\chi\otimes\chi^{-1}))=\sgn_G$.
\end{proof}

Let $\pi_{\theta}$ denote the cuspidal representation of $G=\mathrm{GL}(2,q)$, where $\theta:\mathbb{F}_{q^2}^{\times}\to \C^{\times}$ is as defined in \cite[Section $6.4$]{bushnell}.
We denote the center of $G$ by $Z$.
\begin{prop}
	We have
	$$\mathrm{Res}\mid_D \pi_{\theta}=\mathrm{Ind}_Z^D \theta.$$	
\end{prop}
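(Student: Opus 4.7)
The plan is a direct character comparison on $D$. Both $\pi_\theta|_D$ and $\mathrm{Ind}_Z^D \theta$ have dimension $q-1 = [D:Z]$, and $D$ is abelian, so equality of characters on $D$ is equivalent to equality of representations.

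First I would invoke the standard character formula for a cuspidal representation $\pi_\theta$ of $\GL_2(\F_q)$ (see, e.g., \cite[Section~6.4]{bushnell}). On the split torus this formula reads
$$\chi_{\pi_\theta}(z_a) = (q-1)\,\theta(a), \qquad \chi_{\pi_\theta}(t_{(a,b)}) = 0 \text{ for } a\neq b,$$
where on the left we view $a \in \F_q^\times$ as embedded in $\F_{q^2}^\times$. In words, cuspidals are supported away from the split regular semisimple locus and take their central value at scalar matrices.

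Next I would compute $\chi_{\mathrm{Ind}_Z^D \theta}$ via the Frobenius induction formula
$$\chi_{\mathrm{Ind}_Z^D \theta}(d) = \frac{1}{|Z|} \sum_{\substack{x \in D \\ x^{-1}dx \in Z}} \theta(x^{-1} d x).$$
Since $D$ is abelian, $x^{-1}dx = d$ for every $x \in D$, so the sum equals $[D:Z]\,\theta(d) = (q-1)\,\theta(d)$ when $d \in Z$ and $0$ otherwise. Thus $\chi_{\mathrm{Ind}_Z^D \theta}$ takes the value $(q-1)\theta(a)$ at $z_a$ and vanishes on every non-scalar diagonal element.

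Comparing the two formulas term by term across $D$, the characters agree, hence $\pi_\theta|_D \cong \mathrm{Ind}_Z^D \theta$ as representations of $D$. The only nontrivial input is the character value of the cuspidal $\pi_\theta$ on the split torus; once that is cited from the character table of $\GL_2(\F_q)$, the abelianness of $D$ reduces the induced character to a one-line calculation, so I expect no real obstacle beyond correctly invoking that formula.
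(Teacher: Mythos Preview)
Your argument is correct. The character values you cite for $\pi_\theta$ on the split torus are exactly those recorded in Table~\ref{table3} of the paper, and the computation of the induced character from $Z$ to the abelian group $D$ is immediate. Since $D$ is finite abelian, equality of characters gives the isomorphism.

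Your route differs from the paper's. The paper does not invoke the character table directly; instead it uses Bushnell--Henniart's virtual realisation
\[
\pi_\theta = \Ind^G_{ZN}\psi_\theta - \Ind^G_T \theta
\]
and applies Mackey's double-coset formula to each induced piece. After showing (via an eigenvalue argument) that $D\cap gZNg^{-1}=Z=D\cap gTg^{-1}$ for every $g$, each Mackey summand becomes $\Ind_Z^D\theta$, and a double-coset count gives $(q+1)-q=1$ copy. Your approach is shorter and more transparent once one is willing to quote the character table, which the paper does elsewhere anyway; the paper's approach is more structural and explains \emph{why} the restriction is a single induction from the centre, at the cost of the Mackey bookkeeping and the double-coset size computation.
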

\begin{proof}
	According to \cite[page 47]{bushnell} we have
	
	$$\pi_{\theta} = \Ind^G_{ZN}(\psi_{\theta}) - \Ind^G_T \theta,$$ where $\psi_{\theta}(zn) = \theta(z) \psi(n).$ Here $\psi$ is a character of $N$. Observe that $gZNg^{-1} \cap D = Z = D \cap gTg^{-1}$ using eigen values argument for both. We have
	
	\begin{align*}
	Res_D \pi(\theta) &= Res_D \Ind^G_{ZN} \psi_{\theta} - Res_D \Ind^G_T \theta\\
	&=\bigoplus_{g \in D\backslash G / ZN} \Ind^D_{D \cap gZNg^{-1}} \psi_{\theta}^{g}- \bigoplus_{g \in D\backslash G / T} \Ind^D_{D \cap gTg^{-1}} \theta^{g}\\
	&= \bigoplus_{g \in D\backslash G / ZN} \Ind^D_{Z} \psi_{\theta}- \bigoplus_{g \in D\backslash G / T} \Ind^D_Z \theta\\
	&= (|D \backslash G / ZN|-|D\backslash G/ T|) \Ind^D_Z \theta\\
	&=(\dfrac{|G|\cdot|Z|}{|D|\cdot|ZN|}- \dfrac{|G|\cdot|Z|}{|D|\cdot|T|})\Ind^D_Z \theta\\
	&=\Ind^D_Z \theta .
	\end{align*}
	
\end{proof}

\begin{lemma}
	We have $w_1(\pi_{\theta})= \sgn_G$.
\end{lemma}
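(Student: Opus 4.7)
The plan is to restrict everything to the diagonal subgroup $D$, invoke the detection isomorphism of Theorem \ref{thm3}, and reduce the computation to the determinant of an induced representation on the abelian group $D$. Since the restriction map $H^1(G,\ztwo) \to H^1(D,\ztwo)$ is injective, it suffices to prove $w_1(\pi_\theta|_D) = \sgn_G|_D$. As $\pi_\theta$ is orthogonal (we are in the regime $\theta^\tau = \theta^{-1}$), so is $\pi_\theta|_D$, and $w_1$ is represented by the $\{\pm 1\}$-valued character $\det(\pi_\theta|_D)$. The preceding proposition identifies $\pi_\theta|_D \cong \Ind_Z^D(\theta|_Z)$, turning the problem into a determinant computation for this induced representation.

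Next, I apply formula \eqref{Bush}:
$$\det\bigl(\Ind_Z^D(\theta|_Z)\bigr) = \det(\Ind_Z^D \mathbb{1}) \cdot \bigl((\theta|_Z) \circ \ver_{D/Z}\bigr).$$
The transfer factor is trivial: since $D$ is abelian, the standard formula gives $\ver_{D/Z}(d) = d^{[D:Z]} = d^{q-1}$, and every element of $D \cong \F_q^\times \times \F_q^\times$ has order dividing $q-1$. For the first factor, since $D$ is abelian, $\Ind_Z^D \mathbb{1}$ decomposes as the direct sum of the inflations to $D$ of all characters of the cyclic quotient $D/Z \cong \F_q^\times$. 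Its determinant is therefore the product of these characters, and for a cyclic group of even order $q-1$ this product is the unique quadratic character $\sgn$. Using the identification $\mathrm{diag}(x,y) Z \leftrightarrow y x^{-1}$, one obtains
$$\det(\Ind_Z^D \mathbb{1})(\mathrm{diag}(x,y)) = \sgn(y x^{-1}) = \sgn(xy) = \sgn_G|_D(\mathrm{diag}(x,y)).$$
Combining the two factors gives $w_1(\pi_\theta|_D) = \sgn_G|_D$, hence by detection $w_1(\pi_\theta) = \sgn_G$.

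The main computational step is identifying $\det(\Ind_Z^D \mathbb{1})$ with $\sgn_G|_D$; this can either be done via the product-of-characters argument sketched above, or equivalently by Zolotarev's lemma, noting that $\mathrm{diag}(x,y)$ acts on $D/Z \cong \F_q^\times$ by multiplication by $y x^{-1}$, whose sign as a permutation of $\F_q^\times$ is $\sgn(y x^{-1})$. The transfer computation is entirely routine once one notices that $D$ is abelian of exponent $q-1$.
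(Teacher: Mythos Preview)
Your proof is correct and follows essentially the same route as the paper: restrict to $D$, use the identification $\pi_\theta|_D \cong \Ind_Z^D \theta$, and apply the Bushnell determinant formula \eqref{Bush}. The only differences are cosmetic: the paper computes $\ver_{D/Z}$ by writing down an explicit section and coset representatives, whereas you invoke the cleaner fact that the transfer for an abelian group is the $[D:Z]$-th power map; and the paper phrases the decomposition of $\Ind_Z^D\mathbb{1}$ via Frobenius reciprocity as $\bigoplus_\chi \chi\otimes\chi^{-1}$, which is exactly your ``inflation of all characters of $D/Z$'' in different clothing.
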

\begin{proof}
From Equation \eqref{Bush} we have
\begin{equation}
\det(\mathrm{Ind}_Z^D\theta)=\det(\mathrm{Ind}_Z^D\mathbb{1})^{\dim\theta}\cdot (\det\theta\circ \mathrm{ver}_{D/Z})
\end{equation}
Using Frobenius reciprocity theorem we deduce that $$\mathrm{Ind}_Z^D\mathbb{1}=\bigoplus\limits_{\chi\in\mathbb{F}_q^{\times}}\chi\otimes \chi^{-1}.$$
This gives $\det(\mathrm{Ind}_Z^D\mathbb{1})=\sgn_D$.
Consider the map $t\left(\begin{pmatrix}
a & 0\\
0 & b
\end{pmatrix} Z\right)=\begin{pmatrix}
ab^{-1} & 0\\
0 & 1
\end{pmatrix}$.
This gives $h_{xg}=\begin{pmatrix}
g_2 & 0\\
0 & g_2
\end{pmatrix}$, where $g=\begin{pmatrix}
g_1 & 0\\
0 & g_2
\end{pmatrix}$.
Therefore
\begin{align*}
\mathrm{ver}_{D/Z}(g)&=\prod_{x\in D/Z}h_{xg}\\
&=g_2^{q-1}I_2\\
&=I_2.
\end{align*}

Therefore $w_1(\pi_{\theta})=\sgn_G$.
\end{proof}

A complete list of spinorial (see Section \ref{oir}) OIRs of $\GL_2(\F_q)$ is given in \cite[Theorem $4$]{spjoshi}. 
Let $b_1$ denote the nonzero element in $H^1(\GL_2(\F_q),\ztwo) \cong  \Z/2\Z$ and $b_2$ be the nonzero element in $H^2(\GL_2(\F_q)) \cong \Z/2\Z$ (see \cite[Proposition $7$]{spjoshi}).  Note that $b_1 = s_1 + s_2$ and $b_2=t_1+t_2$ in $H^*(D)$. Therefore $b_1^2 =0$. Hence $w_2(\pi) = 0$ iff $\pi$ is spinorial. 
Using \cite[Theorem $4$]{spjoshi} we have the following example. 
For $q \equiv 1 \pmod 4$ the representation $\pi(\chi,\chi^{-1})$ is spinorial iff 
\begin{enumerate}
	\item $\chi$ is odd and $q \equiv 5 \pmod 8$ or
	\item  $\chi$ is even and $q \equiv 1 \pmod 8$. 
\end{enumerate}
This
clearly gives the formula $w_2(\pi) = \left((q-1)/4 + \epsilon_{\chi}\right)b_2$. Similarly we can calculate $w_2$ for other representations.

\begin{center}
	\begin{table}[ht]
	\caption{ First and Second Stiefel Whitney Classes for OIRs of $\mathrm{GL}_2(\F_q)$}
	\centering	
	
	\begin{tabular}{|c|c|r|}
		\hline
		$\pi$ & $w_1(\pi)$ & $w_2(\pi)$ \\
		\hline
		${\bf 1}$ & $0$ & $0$ \\
		\hline
		$\sgn_G$ & $b_1$ & $0$ \\
		\hline
		$\pi(\chi,\chi^{-1})$ & $b_1$ & $\left(\frac{q-1}{4} + \epsilon_\chi \right)b_2$ if $q \equiv 1 \mod 4$ \\
		& & $(\half(\frac{q-1}{2}-1)+ \epsilon_\chi) b_2$ if $q \equiv 3 \mod 4$ \\
		\hline
		$\pi(1,\sgn)$,  $\St_G \otimes \sgn_G$ & $0$ & $\frac{q-1}{4} b_2$ if $q \equiv 1 \mod 4$ \\
		&& $\half \left( \frac{q-1}{2}+ 1 \right) b_2$ if $q \equiv 3 \mod 4$ \\
		\hline
		$\pi_\theta$, $\St_G$ & $b_1$ &  $\frac{q-1}{4} b_2$ if $q \equiv 1 \mod 4$ \\
		&& $\half \left( \frac{q-1}{2}- 1 \right) b_2$ if $q \equiv 3 \mod 4$ \\
		\hline
		$S(\chi \circ \det_G), \: \chi^2 \neq 1$ & $0$ & $\epsilon_\chi \cdot b_2$ \\
		\hline
		$S(\St_G \otimes \chi)$ &  $0$ &$(\epsilon_\chi + \half(q-1))b_2$ \\
		\hline
		$S(\pi(\chi_1,\chi_2))$ & $0$ & $(\epsilon_{\chi_1 \cdot \chi_2}  + \half(q-1)) b_2$\\
		\hline
		$S(\pi_\theta)$ &  $0$ &$\frac{1}{2}(q-1) \cdot b_2$ \\
		\hline
	\end{tabular}
	\label{table1}
\end{table}
\end{center}

\newpage

\begin{center}
	\begin{table}[ht]
		\caption{ Table showing the Obstruction class of the achiral, spinorial OIRs of $\GL_2(\F_q)$}
		\centering	
		
		\tabcolsep=.6cm
		\renewcommand{\arraystretch}{2}
		\begin{tabular}{|p{2cm}| p{5cm}|p{5cm}| }
			\hline
		$\rho$	& $k$ & Obstruction Class for $k\geq 2$ \\
		\hline
			$\pi(1,\sgn)$ &$\upsilon_2(q-1)-2$, if $q\equiv 1\pmod 8$  \newline\newline $\upsilon_2(q+1)-1$, if $q\equiv 7\pmod 8$  & $c_1^{2^k}$, if $q\equiv 1\pmod 8$ \newline\newline $e_1^{2^{k}}$, if $q\equiv 7\pmod 8$ \\
			\hline
			$\St_G \otimes \sgn_G$ &$\upsilon_2(q-1)-2$, if $q\equiv 1\pmod 8$ \newline \newline $\upsilon_2(q+1)-1$, if $q\equiv 7\pmod 8$ &$c_1^{2^k}$, if $q\equiv 1\pmod 8$ \newline\newline $e_1^{2^{k}}$, if $q\equiv 7\pmod 8$ \\
			\hline
			$S(\chi), \: \chi^2 \neq 1$ &  -&- \\
			\hline
			$S(\St_G \otimes \chi)$ & $\upsilon_2(q-1)-1$, if $q\equiv 1\pmod 4$ \newline\newline $\upsilon_2(q+1)$, if $q\equiv 3\pmod 4$ & $c_1^{2^k}$, if $q\equiv 1\pmod 4$ \newline \newline $e_1^{2^{k}}$, if $q\equiv 3\pmod 4$ \\
			\hline
			$S(\pi(\chi_1,\chi_2))$ & $\upsilon_2(q-1)-1$, if $q\equiv 1\pmod 4$, $\epsilon_{\chi_1}=\epsilon_{\chi_2}=0$  \newline\newline $ \upsilon_2(q+3)-1$, if $q\equiv 1\pmod 4$, $\epsilon_{\chi_1}=\epsilon_{\chi_2}=1$ \newline\newline $\upsilon_2(q+1)+1$ if $q\equiv 3\pmod 4$& $c_1^{2^k}$, if $q\equiv 1\pmod 4$,  $\epsilon_{\chi_1}=\epsilon_{\chi_2}=0$  \newline\newline $c_1^{2^k}$, if $q\equiv 1\pmod 4$, $\epsilon_{\chi_1}=\epsilon_{\chi_2}=1$ \newline\newline $e_1^{2^{k}}+ c_2^{2^{k-2}}$ if $q\equiv 3\pmod 4$
			 \\
			\hline
			$S(\pi_{\theta})$  & $\upsilon_2(q-1)-1$, if $q\equiv 1\pmod 4$    & $c_1^{2^k}$, if $q\equiv 1\pmod 4$, $\theta(-1)=1$ \newline\newline
			$c_1^{2^k}+ c_2^{2^{k-2}}$, if $q\equiv 1\pmod 4$, $\theta(-1)=-1$ \\
			\hline
		\end{tabular}
		\label{table2}
	\end{table}
\end{center}

\section{ Some Applications}\label{app}

\subsection{Steenrod Squares}
For a finite group $G$ one can define Steenrod Square operator $\Sq^i:H^j(G,\Z/2\Z)\to H^{i+j}(G,\Z/2\Z)$ (see \cite[page $52$]{adem} for details). For $x\in H^j(G,\ztwo)$ one has
$$\Sq(x)=\sum_{i=0}^{\infty}\Sq^i(x).$$
Note that $\Sq^i(x)=0$ if $i>j$. Here we mention some basic results of the Steenrod square operator on the cohomolohy ring $H^*(C_n,\ztwo)$ where $n$ is even.
We follow \href{https://bchetard.files.wordpress.com/2017/10/group_cohomology.pdf}{the note} by B. I. Chetard closely.

We know $H^*(C_2, \Z/2\Z)=\Z/2\Z[v]$, where $\deg(v)=1$. 
We have 
\begin{equation}
\Sq(v^k)=\sum_{i=0}^{k}\binom{k}{i}v^{k+i}.
\end{equation}
We know $H^*(C_{4n},\ztwo)\cong\dfrac{\mathbb{F}_2[s,t]}{(s^2)}$, where $s$ is of degree $1$ and $t$ is of degree $2$. 

Note that $\Sq^1$ is the Bockstein homomorphism associated to the exact sequence 
$$0\longrightarrow \Z/2\Z\longrightarrow \Z/4\Z\longrightarrow \Z/2\Z\longrightarrow 0.$$
This gives the following long exact sequence
\begin{center}
	\begin{tikzpicture}
	\node (A1) at (0,0) {$H^2(C_{4n},\Z/2\Z)$};
	\node (A2) at (3,0) {$H^2(C_{4n},\Z/4\Z)$};
	\node (A3) at (6,0) {$H^2(C_{4n},\Z/2\Z)$};
	\node (A4) at (9,0) {$H^3(C_{4n},\Z/2\Z)$};
	\node (A5) at (11,0) {$\cdots$};
	
	\draw[->](A1)to node [above]{} (A2);
	\draw[->](A2) to node [above]{$\alpha$} (A3);
	\draw[->](A3) to node [above]{$\beta$} (A4);
	\draw[->](A4)to node [above]{} (A5);
	\end{tikzpicture}
\end{center}
Here $\beta=\Sq^1$. We have $\Sq^1(t)=0$ if $t=\alpha(x)$ for some $x\in H^2(C_{4n},\Z/2\Z)$. Since $\Sq^1(s)=0$ the image of the map $\Sq^1:H^1(C_{4n},\Z/2\Z)\to H^2(C_{4n},\Z/2\Z)$ is zero. Moreover we have $H^2(C_{4n},\Z/4\Z)=\Z/4\Z$ and $H^2(C_{4n},\Z/2\Z)=\Z/2\Z$.  Therefore the exact sequence  becomes

\begin{center}
	\begin{tikzpicture}
	\node (A1) at (0,0) {$0$};
	\node (A2) at (2,0) {$\Z/2\Z$};
	\node (A3) at (4,0) {$\Z/4\Z$};
	\node (A4) at (6,0) {$\Z/2\Z$};
	\node (A5) at (8,0) {$\Z/2\Z$.};
	
	\draw[->](A1)to node [above]{} (A2);
	\draw[->](A2) to node [above]{$$} (A3);
	\draw[->](A3) to node [above]{$\alpha$} (A4);
	\draw[->](A4)to node [above]{$\beta$} (A5);
	\end{tikzpicture}
\end{center}
The map $\alpha$ is surjective. Thus $\ker(\beta)=\Z/2\Z$ and this gives $\Sq^1(t)=0$.
 We obtain
\begin{equation}
\Sq(s)=s,\quad\quad \Sq(t^k)=\sum_{i=0}^k\binom{k}{i}t^{k+i}.
\end{equation}


\subsection{Beyond Obstruction class}


	
	


\begin{lemma}\label{lem12}
Consider a real representation of $C_n\times C_n$ where $n$ is even. Let $2^l$ be the obstruction degree for $\pi$. Then 
\begin{equation*}
w_{2^l + i}(\pi) = Sq^i(w_{2^l}(\pi)),
\end{equation*} 
for $1 \leq i <2^l$.
\end{lemma}
\begin{proof}
By Wu's formula we have
	
$$Sq^i(w_j)=\sum_{t=0}^i \binom{j+t-i-1}{t}w_{i-t}w_{j+t} .$$
We put $j=2^l$ and use the fact that $w_{i-t}= 0$ unless $i=t$. For $i=t$ we obtain 
$$Sq^i(w_{2^l}) = \binom{2^l-1}{i}w_{2^l+i}= w_{2^k+i}. $$ 
\end{proof}

\begin{theorem}\label{app2}
Consider a real representation $\pi$ of $\GL_2(\F_q)$ where $q\equiv 1\pmod 4$. Let the obstruction degree of $\pi$ is $2^{k+1}$ where $k=\min\{\upsilon_2(m_{10}+m_{11}), \upsilon_2(m_{10})+1\}$. Then we have
\[
w_{2^{k+1}+i}(\pi)=
\begin{cases}
0, \quad \text{if}\,\, 1\leq i<2^{k+1}, i\neq 2^k,\\ \\
\left(\dfrac{m_{10}}{2^{k-1}}\right)(t_1^{2^k}t_2^{2^{k-1}}+t_1^{2^{k-1}}t_2^{2^k}), \quad \text{if}\,\, i=2^k.
\end{cases}
\]
In particular $w_r(\pi)=0$ for $r>\deg\pi$.
\end{theorem}

\begin{proof}
For $q\equiv 1\pmod 4$, the obstruction class of a representation $\pi$ of $\GL_2(\F_q)$ is given by 
$$w_{2^{k+1}}(\pi)=\frac{m_{10}+m_{11}}{2^k}(t_1^{2^k}+t_2^{2^k})+\frac{m_{10}}{2^{k-1}}t_1^{2^{k-1}}t_2^{2^{k-1}},$$
where $k=\min\{\upsilon_2(m_{10}+m_{11}), \upsilon_2(m_{10})+1\}$.  From Lemma \ref{lem12} we know that $w_{2^{k+1} + i}(\pi) = \Sq^i(w_{2^{k+1}}(\pi))$.
Note that the expression for obstruction class is actually $\res(w_{2^{k+1}}(\pi))$,
where $\res$ denotes the restriction of the representation to the diagonal subgroup. Since $\Sq\res=\res\Sq$, we directly compute $\Sq^i(w_{2^{k+1}}(\pi))$ for $i<2^{k+1}$. For $j\in\{1,2\}$ we have

\[
\Sq^i(t_j^{2^k})=
\begin{cases}
0,\quad \text{if $i$ is odd},\\
\binom{2^k}{i/2}t_j^{2^k+i/2},\quad \text{if $i$ is even}.
\end{cases}
\]
Observe that $\binom{2^k}{i/2}=0$, unless $i=0$ or $2^{k+1}$. Since $0<i<2^{k+1}$ we obtain $\Sq^i(t_j^{2^k})=0$.
Using Cartan's formula we have 
\begin{align*}
\Sq^i(t_1^{2^{k-1}}t_2^{2^{k-1}})&=\sum_{l_1+l_2=i}\Sq^{l_1}(t_1^{2^{k-1}})\Sq^{l_2}(t_2^{2^{k-1}})\\
&=\sum_{l_1+l_2=i, l_1,\l_2\, \text{even}}\binom{2^{k-1}}{l_1/2}t_1^{2^{k-1}+l_1/2}\binom{2^{k-1}}{l_2/2}t_1^{2^{k-1}+l_2/2}.
\end{align*}
Note that the expression in the right hand side survives if $l_1=2^k, l_2=0$ or $l_1=0, l_2=2^k$.  This gives 
$$w_{3\cdot 2^k}(\pi)=\left(\dfrac{m_{10}}{2^{k-1}}\right)(t_1^{2^k}t_2^{2^{k-1}}+t_1^{2^{k-1}}t_2^{2^k}).$$

\end{proof}

\begin{theorem}\label{app3}
Consider a real representation of $\GL_2(\F_q)$, where $q\equiv 3\pmod 4$. Let the obstruction degree of $\pi$ is $2^k$ where $k=\min\{\upsilon_2(m_{10}+m_{11}), \upsilon_2(m_{10})+1\}$. Then 
\[
w_{2^k+i}(\pi)=
\begin{cases}
0, \quad \text{if}\,\, 1\leq i<2^{k}, i\neq 2^{k-1},\\ \\
\left(\dfrac{m_{10}}{2^{k-1}}\right)(v_1^{2^k}v_2^{2^{k-1}}+v_1^{2^{k-1}}v_2^{2^k}), \quad \text{if}\,\, i=2^{k-1}.
\end{cases}
\]
In particular $w_r(\pi)=0$ for $r>\deg\pi$.
	
\end{theorem}

\begin{proof}
The proof is similar to that of Theorem \ref{app2}.
	\end{proof}

\subsection{Anisotropic Torus}

Let $a \in \F_{q^2}^{\times}$, $G=\GL_2(\F_q)$. If we identify $\F_{q^2}$ with $\F_q \oplus \F_q$ then the map $x \to a\cdot x$ gives a linear map $M_a \in G  $. Then $M=\{M_a, a \in \F_{q^2}^{\times} \}$ is a cyclic group of order $q^2-1$ and is called an anisotropic torus. 
\begin{theorem}\label{app1}
	  The anisotropic torus $M$ does not detect the mod-2 cohomology of $G$. 	
 \end{theorem} 
\begin{proof}
	Take representation $\pi = S(\chi)$, where $\epsilon_{\chi}=1$. Then $w_2(\pi) \neq 0$.
	 Let $\hat g$ be the generator of $M$. Now we have
	
	$$w_2(\pi\mid_M)= \kappa(c_1(\chi \circ \det\mid_M)).$$
	Since the map $\chi \circ \det\mid_M$ factors through $\F_q^{\times}$ we obtain $\chi \circ \det (\hat{g})=\zeta_{q-1}^j=\zeta_{q^2-1}^{(q+1)j}$. Let $u=c_1(\dot{\chi})$ where $\dot{\chi} \in \mathrm{Hom}(M, \C^{\times})$ such that $\dot{\chi}(\hat{g}) = \zeta_{q^2-1}$. As a result we get 
	\begin{align*}
	\kappa(c_1(\chi \circ \det\mid_M))&= \kappa((q+1)ju)\\
	&=(q+1)j \kappa(u)\\
	&=0.
	\end{align*}
	Hence the proof.
	
	\end{proof}

\bibliographystyle{alpha}
\bibliography{mybib} 

\end{document}